\pdfoutput=1
\documentclass{amsart}

\usepackage{amssymb}

\usepackage{graphicx}

\usepackage{mathrsfs}
\usepackage{amsfonts}

\usepackage{color}
\usepackage{verbatim}

\newtheorem{theorem}{Theorem}[section]
\newtheorem*{theorem*}{Theorem}

\newtheorem{lemma}[theorem]{Lemma}
\newtheorem{proposition}[theorem]{Proposition}
\newtheorem{corollary}[theorem]{Corollary}
\newtheorem{conjecture}[theorem]{Conjecture}

\theoremstyle{definition}
\newtheorem{definition}[theorem]{Definition}
\newtheorem{example}[theorem]{Example}

\theoremstyle{remark}

\numberwithin{equation}{section}

\begin{document}

\title{$k$-Parabolic Subspace Arrangements}

\author{H\'{e}l\`{e}ne Barcelo}

\address{School of Mathematical and Statistical Sciences, Arizona State University, Tempe, AZ and 
Mathematical Sciences Research Institute, Berkeley, CA}
\email{hbarcelo@msri.org}

\author{Christopher Severs}
\address{Mathematical Sciences Research Institute, Berkeley, CA}
\email{csevers@msri.org}

\author{Jacob A. White}
\address{School of Mathematical and Statistical Sciences, Arizona State University, Tempe, AZ}
\email{jacob.a.white@asu.edu}

\subjclass[2000]{Primary 52C35, 05E99}

\date{}

\keywords{Subspace Arrangement, Eilenberg-MacLane Space, Discrete Homotopy Theory}

\begin{abstract}

In this paper, we study $k$-parabolic arrangements, a generalization of $k$-equal arrangements for finite real reflection groups. 
When $k=2$, these arrangements correspond to the well-studied Coxeter arrangements. 
Brieskorn (1971) showed that the fundamental group of the complement, over $\mathbb{C}$, of the type $W$ Coxeter arrangement is isomorphic to the pure Artin group of type $W$. 
Khovanov (1996) gave an algebraic description for the fundamental group of the complement, over $\mathbb{R}$, of the $3$-equal arrangement. 
We generalize Khovanov's result to obtain an algebraic description of the fundamental groups of the complements 
of $3$-parabolic arrangements for arbitrary finite reflection groups. 
Our description is a real analogue to Brieskorn's description.

\end{abstract}

\maketitle

\section{Introduction}

A \emph{subspace arrangement} $\mathscr{A}$ is a collection of linear subspaces of a finite dimensional vector space $V$, 
such that there are no proper containments among the subspaces.  
One of the main questions regarding subspace arrangements is to study the structure of the 
\emph{complement} $\mathcal{M}(\mathscr{A}) = V - \cup_{X \in \mathscr{A}} X$. 
The original motivation for studying this structure comes from the work of Arnold \cite{arnold}. He studied the topology of the complement of
the Braid arrangement, defined below, in connection to his work on the Thirteenth Hilbert problem. A history detailing his work in this area, and related 
results on studying the topology of complements of hyperplane arrangements can be found in the Introduction of the book \emph{Arrangements of Hyperplanes}
\cite{orlik-terao}.

Much of the information about the structure of the complement may be captured by studying its (co)homology and homotopy groups. In this paper we will be mainly concerned with the homotopy groups of the complement. We note that when we say homotopy we refer to both the classical notion of homotopy as well as the newer \emph{discrete homotopy theory} of Laubenbacher and Barcelo, et al. developed in \cite{foundations}. Discrete homotopy theory involves constructing a bigraded sequence of groups defined on an abstract simplicial complex (the Coxeter complex in this case) that are invariants of a combinatorial nature. Instead of being defined on the topological space of a geometric realization of a simplicial complex, the discrete homotopy groups are defined in terms of the \emph{combinatorial} connectivity of the complex. In fact, we show that by studying this combinatorial invariant of the subspace arrangement we recover information about the classical homotopy of the arrangement. 

We begin with some motivating examples in the study of the homotopy of the complement of a hyperplane arrangement. The first example concerns the braid arrangement, mentioned above, which is the quintessential example in the study of hyperplane arrangements. 

The braid arrangement is the collection of ``diagonals'' $z_i = z_j$ for $1 \leq i < j \leq n$ from a complex $n$-dimensional vector space. 
In 1963, Fox and Neuwirth \cite{fox-neuwirth} showed that the fundamental group of the complement is isomorphic to the pure braid group. 
It was also shown by Fadell and Neuwirth \cite{fadell-neuwirth} that the braid arrangement is a $K(\pi,1)$ arrangement. A path-connected space is said to be $K(\pi,1)$ (which is actually a special case of a $K(\pi,m)$, or Eilenberg-MacLane space) if the homotopy groups $\pi_i$ are trivial for $i>1$ and the fundamental group $\pi_1$ is isomorphic to the group $\pi$. We say that an arrangement is $K(\pi,1)$ if the complement of the arrangement is a $K(\pi,1)$ space. Determining if an arrangement is $K(\pi,1)$ is one of the classical problems in the study of hyperplane and subspace arrangements. 

Another classical example of a $K(\pi,1)$ space comes from a simplicial hyperplane arrangement. A simplicial hyperplane arrangement is an arrangement whose regions are simplicial cones. In 1972, Deligne \cite{deligne}, proved that the complement of the complexification of any simplicial hyperplane arrangement is $K(\pi,1)$. Deligne's result also proved a conjecture of Brieskorn, that the complexification of a reflection arrangement is $K(\pi,1)$. This is due to the fact that reflection arrangements are simplicial. Moreover, in 1971 Brieskorn \cite{brieskorn} found that the fundamental group of the complement of the complexification of a reflection arrangement is isomorphic to the pure Artin group of type $W$. This illustrates another central question in the study of hyperplane and, more generally, subspace arrangements. Can we give a presentation for the fundamental group of the complement in terms of generators and relations?

In the case of complex hyperplane arrangements, this question has been answered completely by Arvola. In \cite{arvola}, he provides a method to find a presentation for any complex hyperplane arrangement, not just those arrangements which are complexifications of real arrangements. Although the work of Arvola (and Deligne and Brieskorn before him) covers a large class of subspace arrangements, the subspace arrangements that we consider in this paper cannot be viewed as complex hyperplane arrangements and so the above results do not apply. We present an example of such an arrangement now. 

A real subspace arrangement which cannot be viewed as a complex hyperplane arrangement is the $3$-equal arrangement. This arrangement is the collection of all subspaces of the form $x_i = x_j = x_k$ for $1 \leq i < j < k \leq n$ in a real 
$n$-dimensional vector space. In 1996, Khovanov proved that the complement of this arrangement is $K(\pi,1)$ \cite{khovanov}. He also gave a presentation for the fundamental group of the complement. The presentation of this group, as well as the presentation of the pure braid group, use the symmetric group in their construction. It is well known that the symmetric group is generated by simple transpositions $s_i = (i, i+1)$, $i \in [n-1]$, 
subject to the following relations:

\begin{enumerate}
 \item $s_i^2 = 1$

  \item $s_i s_j = s_j s_i$, if $|i - j| > 1$

  \item $s_i s_{i+1} s_i = s_{i+1} s_i s_{i+1}$
\end{enumerate}

The braid group has presentation given by the same generating set, but subject only to relations 2 and 3. 
The pure braid group is the kernel of the surjective homomorphism, $\varphi$, from the braid group to the symmetric group, given by $\varphi(s_i) = s_i$ for all $i \in [n-1]$. Khovanov's presentation of the fundamental group of the complement of the $3$-equal arrangement is very similar. Khovanov introduces the \emph{triplet group}, which we denote $A_{n-1}'$. It is the group generated by the simple transpositions $s_i$, but subject only to relations 1 and 2. He defines the \emph{pure triplet group} to be the kernel of the surjective homomorphism, 
$\varphi': A_{n-1}' \to A_{n-1}$, given by $\varphi'(s_i) = s_i$ for all $i \in [n-1]$.  Khovanov then shows that the fundamental group of the complement of the $3$-equal arrangement is isomorphic to the pure triplet group \cite{khovanov}. This provides a real analogue to the results of Fadell, Fox and Neuwirth.

In the same spirit as Khovanov, we give real analogues of the results of Deligne and Brieskorn for subspace arrangements in $\mathbb{R}^n$ 
that correspond to finite real reflection groups. In particular, given a finite real reflection group $W$, we define a family of (real) subspace arrangements which we call $k$-parabolic arrangements, denoted $\mathscr{W}_{n,k}$.  We define an analogue of the pure Artin group for a real Coxeter arrangement in the following way. Construct a new Coxeter group $W'$ on the same generating set $S$ as $W$, but we relax all relations of $W$ that are not commutative relations or square relations. Then the following is true:

\begin{theorem*}[Theorem 4.1]
 The fundamental group of the complement of the $3$-parabolic arrangement $\mathscr{W}_{n,3}$ is isomorphic to the kernel of a surjective homomorphism $\varphi': W' \to W$ given by $\varphi'(s) = s$ for all $s \in S$. 
\end{theorem*}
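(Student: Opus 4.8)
The plan is to follow the pattern of Khovanov's proof for the $3$-equal arrangement, with $W$ playing the role of the symmetric group and the linear $W$-action doing the bookkeeping. Write $\mathcal{M} = \mathcal{M}(\mathscr{W}_{n,3})$. The subspaces making up $\mathscr{W}_{n,3}$ are indexed by the irreducible rank-two parabolic subgroups of $W$ together with their $W$-translates, so the arrangement is $W$-invariant and $W$ acts on $\mathcal{M}$; moreover each such subspace has codimension at least two, so $\mathcal{M}$ is connected. The action is not free: a point lying on a single reflecting hyperplane $H_s$ but on no member of $\mathscr{W}_{n,3}$ is fixed exactly by the order-two subgroup $\langle s \rangle$. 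Viewing $\mathcal{M}/W$ as a good orbifold (equivalently, using the fibration $\mathcal{M}\to\mathcal{M}_{hW}\to BW$ with $\mathcal{M}_{hW}$ the homotopy quotient), connectedness of $\mathcal{M}$ yields a short exact sequence
\[
1 \longrightarrow \pi_1(\mathcal{M}) \longrightarrow \pi_1^{\mathrm{orb}}(\mathcal{M}/W) \stackrel{\rho}{\longrightarrow} W \longrightarrow 1 ,
\]
where $\rho$ is the monodromy homomorphism of the branched cover $\mathcal{M}\to\mathcal{M}/W$. It therefore suffices to identify $\pi_1^{\mathrm{orb}}(\mathcal{M}/W)$ with $W'$ in a way that carries $\rho$ to $\varphi'$.

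Next I would compute $\pi_1^{\mathrm{orb}}(\mathcal{M}/W)$ by a Seifert--van Kampen analysis based on a closed fundamental chamber $\overline{C}$, whose walls are the hyperplanes $H_s$ for $s \in S$ (one first reduces to $W$ irreducible, since for $W = W_1 \times W_2$ the arrangement, the group $W'$, and hence both sides of the desired isomorphism factor as products). The group is generated by loops $\sigma_s$, one for each wall $H_s$, encircling the corresponding order-two reflector, which gives the relations $\sigma_s^2 = 1$. The remaining relations come from the strata of codimension at least two, each controlled by a parabolic subgroup $W_I$ of rank at least two, and there are exactly two local pictures at a codimension-two stratum $H_s \cap H_t$. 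If the order $m_{st}$ of $st$ equals $2$, then $W_{\{s,t\}} \cong (\mathbb{Z}/2)^2$ is reducible, $H_s \cap H_t$ is \emph{not} a member of $\mathscr{W}_{n,3}$ and is retained, the local model is $(\mathbb{R}^2/(\mathbb{Z}/2)^2)$ times a Euclidean factor, and one obtains $(\sigma_s\sigma_t)^2 = 1$, i.e.\ $\sigma_s\sigma_t = \sigma_t\sigma_s$. If $m_{st}\geq 3$, then $W_{\{s,t\}}$ is irreducible, $H_s\cap H_t$ \emph{is} a member of $\mathscr{W}_{n,3}$ and has been deleted, the local model is $((\mathbb{R}^2\setminus\{0\})/W_{\{s,t\}})$ times a Euclidean factor --- a punctured dihedral quotient with infinite-dihedral orbifold fundamental group --- and \emph{no} relation is imposed between $\sigma_s$ and $\sigma_t$. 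Strata of codimension at least three contribute nothing new: if the parabolic $W_I$ with $|I|\geq 3$ involves a pair with $m_{st}\geq 3$ then $\mathrm{Fix}(W_I)\subseteq\mathrm{Fix}(W_{\{s,t\}})$ has been deleted and the stratum does not meet $\mathcal{M}$, and otherwise $W_I\cong(\mathbb{Z}/2)^{|I|}$ and the only relations produced are the pairwise commutations already recorded. Assembling these gives the presentation $\langle\, \sigma_s\ (s\in S)\ \mid\ \sigma_s^2=1,\ \sigma_s\sigma_t=\sigma_t\sigma_s \text{ when } m_{st}=2\,\rangle$, which is precisely the presentation of $W'$ under $\sigma_s\leftrightarrow s$.

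To close the argument, observe that a lift of $\sigma_s$ to $\mathcal{M}$ runs from a chamber $C$ to its reflected image $sC$, so $\rho(\sigma_s)=s$; hence, under the identification just made, $\rho$ becomes $\varphi'\colon W'\to W$, which is surjective because $S$ generates $W$. The short exact sequence then gives $\pi_1(\mathcal{M}(\mathscr{W}_{n,3}))\cong\ker\varphi'$. The main obstacle is the second step, namely making the local-to-global analysis rigorous: establishing the orbifold Seifert--van Kampen statement and, above all, justifying the local models at every stratum --- in particular the reduction showing that nothing beyond the codimension-two strata contributes, and the crucial point that deleting an irreducible rank-two subspace turns a would-be braid relation into no relation at all. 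This is exactly the sort of combinatorial connectivity statement that the discrete homotopy theory of the Coxeter complex is designed to handle, and I expect the cleanest route is to phrase the entire argument there, reducing Theorem 4.1 to the claim that every closed cube path in the Coxeter complex is, up to the elementary homotopies coming from commuting generators, a word in the square and commutation generators --- with the braid relations genuinely absent.
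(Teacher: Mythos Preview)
Your approach is sound in outline and genuinely different from the paper's. The paper never uses the $W$-action on $\mathcal{M}$ or orbifold fundamental groups. Instead it proceeds in two purely combinatorial steps. First (Theorem~\ref{mainresult}) it identifies $\pi_1(\mathcal{M}(\mathscr{W}_{n,3}))$ with the discrete fundamental group $A_1^{n-2}(\mathscr{C}(W))$ by retracting to the sphere, invoking a result of Bj\"orner--Ziegler to produce a CW model, and checking that the $2$-skeleton of that model is the Cayley graph of $(W,S)$ with $2$-cells attached exactly along the $4$-cycles (the $2m(s,t)$-cycles with $m(s,t)\geq 3$ are precisely the ones whose cores lie in $\mathscr{W}_{n,3}$ and hence get no $2$-cell). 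Second, it shows $A_1^{n-2}(\mathscr{C}(W))\cong\ker\varphi'$ by reading a loop of chambers as a word in $S$, proving that every discrete homotopy decomposes into three elementary moves (repetition, insertion/deletion of $ss$, and the swap $st\leftrightarrow ts$ when $m(s,t)=2$) which match the defining relations of $W'$, and using Tits' word theorem on nil and braid moves in $W'$ to construct the inverse.

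Your route through the short exact sequence $1\to\pi_1(\mathcal{M})\to\pi_1^{\mathrm{orb}}(\mathcal{M}/W)\to W\to 1$ and a stratum-by-stratum computation of the middle group is closer in spirit to the Davis--Januszkiewicz--Scott framework the paper invokes only for the $K(\pi,1)$ statement. Its advantage is structural: the surjection to $W$ and the identification of $\pi_1(\mathcal{M})$ as a kernel come for free from the covering, rather than emerging at the end of a combinatorial computation. Its cost is exactly the step you flag --- making the orbifold van Kampen over the stratified chamber rigorous, including the check that the commuting-codimension-$\geq 3$ corners add nothing. The paper sidesteps this by going straight to the Cayley-graph $2$-complex, where the absence of braid relations is simply the absence of $2$-cells on the long cycles. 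Your final remark, that the cleanest way to close the gap is via discrete homotopy of the Coxeter complex, is in fact the path the paper takes.
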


Our presentation for the fundamental group of such arrangements uses the discrete homotopy group of the arrangement. We show that the discrete fundamental group of the Coxeter complex is isomorphic to $\pi_1$ of the complement of the $3$-parabolic arrangement. Thus, our result shows that sometimes we can replace a group defined in terms of the topology of the space with a group defined in terms of the combinatorial structure of the space. 

It is also true that $\mathscr{W}_{n,3}$ is a $K(\pi,1)$ arrangement. $\mathscr{W}_{n,3}$ is a collection of codimension 2 subspaces that are invariant under the action of some finite real reflection group $W$ and by a result of Davis, Januszkiewicz and Scott \cite{blowup} such arrangements are in fact $K(\pi,1)$.

Although this paper deals primarily with the homotopy of the complement of a subspace arrangement, we would be remiss if we did not mention two closely related results about the cohomology groups of the complement of a subspace arrangement and the homotopy groups of the \emph{union} of the subspace arrangement. The most well known result about the cohomology groups of the complement is due to Goresky and MacPherson. In \cite{goresky-macpherson} they provide a formula that establishes an ismorphism between the cohomology group of the complement and a direct sum of the homology groups of the partially ordered set (poset) of intersections of the subspace arrangement, $\mathcal{L}(\mathscr{A})$. In practice, this result allows one to compute the cohomology groups of the complement of a subspace arrangement by studying the order complexes of intervals of the poset $\mathcal{L}(\mathscr{A})$. A related formula for the homotopy groups of the union of a subspace arrangement is given by Ziegler and \v{Z}ivaljevi\'c in \cite{zieg-ziv}. While the Goresky-MacPherson formula certainly provides a method to calculate the Betti numbers of the complement of a subspace arrangement, it is sometimes not easy to implement. Barcelo and Smith use discrete homotopy theory to give a fomula to compute the first Betti number of the complement of the 3-equal arrangement in \cite{barcelo-smith} without resorting to the Goresky-MacPherson formula. Our results about the fundamental group of the $3$-parabolic arrangement may be used in a similar manner to calculate the first Betti number of the arrangement, though we do not pursue this direction in the current paper. 

The remainder of the paper is organized as follows. 

In Section 2 we give a definition of the $k$-parabolic arrangement, and review some necessary definitions related to Coxeter groups. 
We also relate $k$-parabolic arrangements to previous analogues of the $k$-equal arrangement given by Bj\"orner and Sagan 
\cite{subspacesBD} for types $B$ and $D$. In Section 3, we give a brief overview of discrete homotopy theory 
and the definition of the Coxeter complex. Then we give an isomorphism between the classical fundamental group of the complement 
of the $3$-parabolic arrangement and the discrete fundamental group of the corresponding Coxeter complex. 
In Section 4, we use this isomorphism and a study of discrete homotopy loops in the Coxeter complex to obtain our algebraic description 
of the fundamental group of the complement of the $3$-parabolic arrangement. In Section 5, we study the discrete fundamental groups 
of the Coxeter complex for $k > 3$, and show that they are trivial. 
In Section 6 we conclude with some open questions related to $\mathscr{W}_{n,k}$-arrangements as well as a discussion on the $K(\pi,1)$ 
problem.

\section{Definition of the $\mathcal{W}_{n,k}$-arrangement} 

Let $W$ be a finite real reflection group acting on $\mathbb{R}^n$ 
and fix a root system $\Phi$ associated to $W$. Let  $\Pi$ $\subset \Phi$ 
be a fixed simple system. Finally, let $S$ be the set of simple reflections associated to $\Pi$. 
Assume that $\Pi$ spans $\mathbb{R}^n$. We let $m(s,t)$ denote the order of $st$ in $W$. 
We know that $m(s,s) = 1$ and $m(s,t) = m(t,s)$ for all $s,t \in S$. 
Finally, given a root $\alpha$, let $s_{\alpha}$ denote the corresponding reflection, 
and let $(\cdot, \cdot)$ denote the standard inner product.

Recall that there is a hyperplane arrangement associated to $W$, 
called the \emph{Coxeter arrangement} $\mathscr{H}(W)$, which consists of hyperplanes 
$ H_{\alpha} = \{ x \in \mathbb{R}^n: (x,\alpha) = 0 \}$ for each $\alpha \in \Phi^+$.
Since $\Pi$ spans $\mathbb{R}^n$, the Coxeter arrangement is central and essential, 
which implies that the intersection of all the hyperplanes is the origin. 

There is a poset associated to $\mathscr{H}(W)$. For the Coxeter Arrangement, the associated poset is $\mathcal{L}(\mathscr{H}(W))$, the poset of all intersections of hyperplanes, ordered by reverse inclusion. 
In fact, every pair of elements in this poset have a unique upper bound and lower bound, and hence this poset is actually a lattice.

Since we are generalizing the $k$-equal arrangement, which corresponds to the case $W = A_n$, 
we use it as our motivation. For this paper, we will actually work with the essentialized $k$-equal arrangement. 
The $k$-equal arrangement, $\mathscr{A}_{n,k}$, is the collection of all subspaces given by 
$x_{i_1} = x_{i_2} = \ldots = x_{i_k}$ over all indices $\{i_1, \ldots, i_k\} \subset [n+1]$, 
with the relation $\sum_1^{n+1}x_i = 0$.  We note that the intersection poset 
$\mathcal{L}(\mathscr{A}_{n,k})$ is a subposet of $\mathcal{L}(\mathscr{H}(A_n))$. 
There is already a well-known combinatorial description of both of these posets. 
The poset of all set partitions of $[n+1]$ ordered by refinement is isomorphic to $\mathcal{L}(\mathscr{H}(A_n))$, 
and under this isomorphism, $\mathcal{L}(\mathscr{A}_{n,k})$ is the subposet of set partitions 
where each block is either a singleton, or has size at least $k$. 
However, our generalization relies on a lesser-known description of these posets in terms of parabolic subgroups.

\begin{definition}

A subgroup $G \subseteq W$ is a \emph{parabolic subgroup} if there exists 
a subset $T \subseteq S$ of simple reflections, and an element $w \in W$ such that $G = <wTw^{-1}>$. 
If $w$ can be taken to be the identity, then $G$ is a \emph{standard} parabolic subgroup. 
We view $(G, wTw^{-1})$ as a Coxeter system, and call $G$ \emph{irreducible} if $(G, wTw^{-1})$ is an irreducible system. 

\end{definition}

It is well known that the lattice of \emph{standard} parabolic subgroups, ordered by inclusion, 
is isomorphic to the Boolean lattice. However, the lattice of \emph{all} parabolic subgroups, 
$\mathscr{P}(W)$, ordered by inclusion, was shown by Barcelo and Ihrig \cite{barcelo-ihrig} to be isomorphic 
to $\mathcal{L}(\mathscr{H}(W))$. Since this isomorphism is essential to our generalization, 
we review it. The isomorphism is given by sending a parabolic subgroup 
$G$ to $Fix(G) = \{ x \in \mathbb{R}^n: wx = x, \forall w \in G \}$, and the inverse is given by 
sending an intersection of hyperplanes $X$ to $Gal(X) = \{w \in W: wx = x, \forall x \in X \}$. 

\begin{figure}[htbp]
\includegraphics[height=3.9cm]{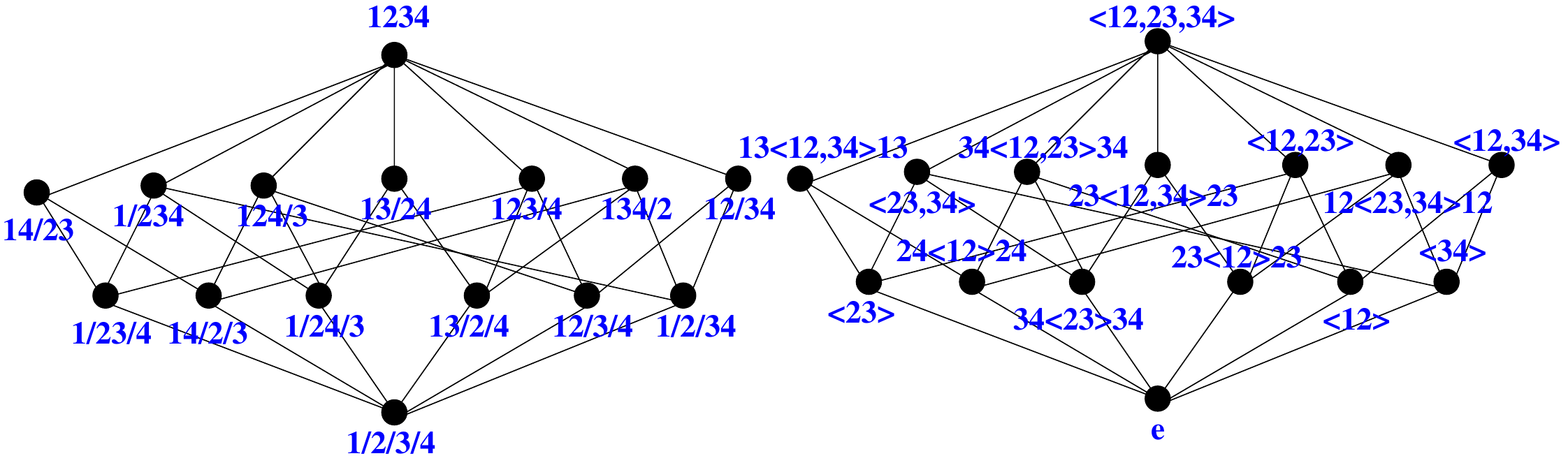}

\caption{The Galois Correspondence for $A_3$}

\end{figure}

This Galois correspondence gives a description of $\mathcal{L}(\mathscr{H}(A_n))$ in terms of parabolic subgroups of $A_n$. 
We also obtain another description of $\mathcal{L}(\mathscr{A}_{n,k})$ under this correspondence.

\begin{proposition}

The Galois correspondence gives a bijection between subspaces of $\mathscr{A}_{n,k}$ and irreducible parabolic subgroups of $A_n$ 
of rank $k-1$.

\end{proposition}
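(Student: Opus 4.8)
The plan is to combine two correspondences already recalled above: the Galois correspondence $X \mapsto Gal(X)$ of Barcelo--Ihrig, which is a bijection between $\mathcal{L}(\mathscr{H}(A_n))$ and the lattice $\mathscr{P}(A_n)$ of all parabolic subgroups, and the classical identification of $\mathcal{L}(\mathscr{H}(A_n))$ with the lattice of set partitions of $[n+1]$. Under the latter identification the subspaces belonging to the arrangement $\mathscr{A}_{n,k}$ are exactly the partitions of $[n+1]$ having a single non-singleton block, that block having size exactly $k$; so it suffices to show that $Gal$ carries precisely these partitions to the irreducible parabolic subgroups of $A_n \cong S_{n+1}$ of rank $k-1$.

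First I would compute $Gal(X)$ for an arbitrary $X \in \mathcal{L}(\mathscr{H}(A_n))$, say with associated set partition $\{B_1, \dots, B_m\}$ of $[n+1]$. A permutation $w \in S_{n+1}$ fixes every point of $X$ if and only if it permutes only coordinates that are forced to be equal on all of $X$; that is, $w$ must preserve each block $B_i$ and may act arbitrarily within it. Hence $Gal(X) = S_{B_1} \times \cdots \times S_{B_m}$, the corresponding Young subgroup. Specializing to $X = \{x_{i_1} = \cdots = x_{i_k}\} \in \mathscr{A}_{n,k}$, we get $Gal(X) = S_{\{i_1,\dots,i_k\}}$, which is conjugate (relabel $\{i_1,\dots,i_k\}$ as $\{1,\dots,k\}$) to the standard parabolic $\langle s_1, \dots, s_{k-1} \rangle$; it is therefore a parabolic subgroup of rank $k-1$, and its Coxeter diagram is the path $A_{k-1}$, which is connected, so it is irreducible. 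Thus $Gal$ sends each subspace of $\mathscr{A}_{n,k}$ to an irreducible parabolic of rank $k-1$, and this assignment is injective since $Gal$ is a bijection on all of $\mathcal{L}(\mathscr{H}(A_n))$.

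For surjectivity, let $G \subseteq S_{n+1}$ be an irreducible parabolic of rank $k-1$. By Barcelo--Ihrig, $G = Gal(X)$ for a unique $X$, and by the computation above $G = S_{B_1} \times \cdots \times S_{B_m}$ where $\{B_1,\dots,B_m\}$ is the set partition associated to $X$; its rank is $\sum_i (|B_i| - 1)$ and its Coxeter diagram is the disjoint union of the paths $A_{|B_i| - 1}$. Irreducibility forces this diagram to be connected, so all but one block is a singleton, and the remaining block $B_j$ satisfies $|B_j| - 1 = k - 1$, i.e. $|B_j| = k$. Hence $X$ is the subspace $\{x_i = x_{i'} : i, i' \in B_j\}$ of $\mathscr{A}_{n,k}$, establishing the bijection.

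The only ingredient here that goes beyond routine bookkeeping is the identification of the parabolic subgroups of $S_{n+1}$ with Young subgroups, together with reading off the rank and the irreducibility from the multiset of block sizes; this is classical, the essential point being that ``parabolic'' is understood to include all $W$-conjugates of standard parabolic subgroups, which is exactly what promotes compositions of $[n+1]$ to arbitrary set partitions. Everything else follows formally from the two correspondences recalled in the text, so I anticipate no real obstacle.
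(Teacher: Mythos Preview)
Your proof is correct, and it reaches the same conclusion as the paper by a somewhat different route. The paper argues via the orbit structure: it checks that the single subspace $x_1=\cdots=x_k$ has irreducible Galois group $\langle (1,2),\dots,(k-1,k)\rangle$, observes that $\mathscr{A}_{n,k}$ is the $S_{n+1}$-orbit of this subspace, and deduces the forward direction from $Gal(wX)=wGal(X)w^{-1}$; for the converse it writes an arbitrary irreducible parabolic as $wHw^{-1}$ with $H$ a \emph{standard} irreducible parabolic, classifies those as $\langle (i,i+1),\dots,(i+k-1,i+k)\rangle$, and reads off $Fix(G)$ directly. You instead pass wholesale through the set-partition description of $\mathcal{L}(\mathscr{H}(A_n))$, computing $Gal(X)$ as the Young subgroup $S_{B_1}\times\cdots\times S_{B_m}$ and reading off both rank and irreducibility from the block sizes. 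Your approach is a bit more uniform---one computation handles both directions, and you never need to single out standard parabolics---while the paper's orbit argument has the advantage of making the $W$-invariance of the construction explicit, which is exactly what is used later to show that $\mathscr{W}_{n,k}$ is closed under the $W$-action for general $W$.
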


\begin{proof}
Let $X$ be a subspace of $\mathbb{R}^{n+1}$ given by $x_1 = \ldots = x_k$. 
The $k$-equal arrangement is the orbit of $X$ under the action of $A_n = S_{n+1}$, 
and $Gal(X) = <(1,2), ..., (k-1,k)>$, hence is irreducible. For $w \in A_{n}$, $Gal(wX) = wGal(X)w^{-1}$, 
so all of the subspaces in the $k$-equal arrangement have irreducible Galois groups. 

Conversely, every irreducible parabolic subgroup of rank $k-1$ in $A_{n}$ is the Galois group 
of some subspace in the $k$-equal arrangement. To see this, consider an irreducible parabolic subgroup 
$G$ of rank $k-1$. Then there exists a standard parabolic subgroup $H$ and an element $w \in W$ 
such that $G = wHw^{-1}$. Since $H$ is an irreducible \emph{standard} parabolic subgroup, 
$H = <(i, i+1), ..., (i+k-1, i+k)>$ for some $1 \leq i \leq n+1-k$. Thus, $Fix(H)$ is given by 
$x_i = \ldots = x_k$, and $Fix(G) = Fix(wHw^{-1}) = wFix(G)$ is given by $x_{w(i)} = \ldots x_{w(k)}$, 
which is a subspace in the $k$-equal arrangement. \end{proof}

With this proposition as motivation, we give the following definition for a $k$-parabolic arrangement.

\begin{definition}
Let $W$ be an finite real reflection group of rank $n$.
Let  $\mathscr{P}_{n,k}(W)$ be the collection of \emph{all} irreducible parabolic subgroups of $W$ of rank $k-1$.

Then the $k$-parabolic arrangement $\mathscr{W}_{n,k}$ is the collection of subspaces 

$$\{ Fix(G): G \in \mathscr{P}_{n,k}(W) \}.$$

\end{definition}

The $k$-parabolic arrangements have many properties in common with the $k$-equal arrangements. 
Both of these arrangements can be embedded in the corresponding Coxeter arrangement. 
That is, every subspace in these arrangements can be given by intersections of hyperplanes of the Coxeter arrangements. 
Moreover, $\mathcal{L}(\mathscr{W}_{n,k})$ is a subposet of 
$\mathcal{L}(\mathscr{H}(W))=\mathcal{L}(\mathscr{W}_{n,2})$, and these arrangements are invariant under the action of $W$. 
Indeed, consider a subspace $X$ in $\mathscr{W}_{n,k}$ and an element $w \in W$. 
Since $X$ is in $\mathscr{W}_{n,k}$, $Gal(X)$ is an irreducble parabolic subgroup of rank $k-1$. 
It is clear that $Gal(wX) = wGal(X)w^{-1}$, so $Gal(wX)$ is also an irreducible parabolic subgroup of rank $k-1$, 
whence $Gal(wX) \in \mathscr{P}_{n,k}(W)$. Since $Fix(Gal(wX)) = wX$, it follows that $wX \in \mathcal{W}_{n,k}$.

When $W$ is of type $A$, we see that we have recovered the $k$-equal arrangement. 
To see what happens when $W$ is type $B$ or $D$, first we recall type $B$ and $D$ 
analogues of the $k$-equal arrangement. In 1996, Bj\"orner and Sagan defined a class of subspace arrangements 
of type $B$ and $D$ \cite{subspacesBD}, which they call the $\mathscr{B}_{n,k,h}$-arrangements and $\mathscr{D}_{n,k}$-arrangements. 

\begin{definition}
The $\mathscr{D}_{n,k}$-arrangement consists of subspaces  given by 
$\pm x_{i_1} = \pm x_{i_2} = \ldots = \pm x_{i_k}$, over distinct indices 
$i_1, \ldots, i_k$. The $\mathscr{B}_{n,k,h}$-arrangements are obtained from the $\mathscr{D}_{n,k}$-arrangements 
by including subspaces given by $x_{i_1} = \ldots = x_{i_h} = 0$ over distinct indices $i_1, \ldots, i_h$, with $h < k$. 
\end{definition}

The Betti numbers of $\mathcal{M}(\mathscr{B}_{n,k,h})$ were computed by Bj\"orner and Sagan in \cite{subspacesBD}, 
while the Betti numbers of $\mathcal{M}(\mathscr{D}_{n,k})$ were computed by Kozlov and Feichtner in \cite{subspacesD}.

\begin{example}[When $W$ is of type $B$] 
When $W$ is of type $B$, the $k$-parabolic arrangement is the $\mathscr{B}_{n,k,k-1}$-arrangement of Bj\"orner and Sagan \cite{subspacesBD}. 
Recall that $B_n$ has presentation given by generators $s_i, 0 \leq i \leq n$, such that 
$<s_1, \ldots, s_n>$ generate the symmetric group, $(s_0s_1)^4 = 1$, and $s_0s_i = s_is_0$ for $i > 1$. 
It is well-known that the $\mathscr{B}_{n,k,k-1}$-arrangement is the orbit of two subspaces given by 
$x_1 = \ldots = x_k$ and $x_1 = \ldots = x_{k-1} = 0$, under the action of $B_n$. 
Clearly the Galois groups of these two spaces are given by $<s_1, \ldots, s_{k-1}>$ and $<s_0, \ldots, s_{k-2}>$. 
These are both irreducible parabolic subgroups of rank $k-1$, so every subspace of the 
$\mathscr{B}_{n,k,k-1}$-arrangement corresponds to an irreducible parabolic subgroup of rank $k-1$. 
Similarly, given an irreducible parabolic subgroup of rank $k-1$, 
it is not hard to show that this subgroup corresponds to a subspace in the $\mathscr{B}_{n,k,k-1}$-arrangement. 
The argument is similar to the case for type $A$, and we omit the details.
\end{example}

\begin{example}[When $W$ is of type $D$]
When $W$ is of type $D$, the $3$-parabolic arrangement is the $\mathscr{D}_{n,3}$-arrangement. 
However, when $W = D_n$ and $k>3$, the $k$-parabolic arrangement is not the same as the 
$\mathscr{D}_{n,k}$-arrangement as defined by Bj\"orner and Sagan. 
If we fix the standard basis of $\mathbb{R}^n$ $e_1, \ldots, e_n$, then $D_n$ has a simple root system 
consisting of $\alpha_1 = e_1+e_2, \alpha_i = e_{i-1}-e_i$, $2 \leq i \leq n$. 
We see that the subgroup $G$ generated by the simple reflections associated to $\alpha_1, \ldots, \alpha_{k-1}$ 
is a standard irreducible parabolic subgroup of rank $k-1$, and is in fact a Coxeter system for $D_k$. 
Also, $Fix(G)$ is given by $x_1 = x_2 = \ldots = x_{k-1} = 0$, 
which is not a subspace of the $\mathscr{D}_{n,k}$-arrangement defined by Bj\"orner and Sagan. 
Since $\mathscr{W}_{n,k}$ is closed under the action of $W$, it is clear that $\mathscr{W}_{n,k}$ 
includes subspaces of the form $x_{i_1} = ... = x_{i_{k-1}} = 0$, over indices $1 \leq i_1 < \ldots < i_k \leq n$, 
so in fact the $k$-parabolic arrangement corresponds to the $\mathscr{B}_{n,k,k-1}$-arrangement of Bj\"orner and Sagan, 
and thus, the $k$-parabolic arrangements of type $B$ and $D$ are the same when $k > 3$. 
\end{example}

We end this section with the remark that it is possible to redefine 
the $\mathscr{D}_{n,k}$ and $\mathscr{B}_{n,k,h}$ arrangements in a way similar to our definition 
of $k$-parabolic arrangements. Given an irreducible parabolic subgroup 
$G = <wIw^{-1}>$ for some $I \subseteq S, w \in W$, the \emph{type} of $G$ is the type of the Coxeter system 
$(G, wIw^{-1})$ in the Cartan-Killing classification. 
Then the $\mathscr{D}_{n,k}$-arrangement is the set of all $Fix(G)$, 
where $G$ is a type $A$ irreducible parabolic subgroup of $D_n$ of rank $k-1$. 
Similarly, the $\mathscr{B}_{n,k,h}$-arrangement is the set of all $Fix(G)$, 
where $G$ is either a type $A$ irreducible parabolic sugroup of rank $k-1$, or $G$ is a type $B$ 
irreducible parabolic subgroup of rank $h$.

\section{Discrete Homotopy Theory}

In this section, we recall the definitions of discrete homotopy group, $A_1^q$, on a simplicial complex, and then apply the theory to the Coxeter complex of type $W$. We obtain an 
isomorphism between the fundamental group of $\mathcal{M}(\mathscr{W}_{n,3})$ and $A_1^{n-2}$ of the Coxeter complex. This isomorphism will be used in the next section to obtain our 
presentation of the fundamental group of $\mathcal{M}(\mathscr{W}_{n,3})$.

Recall that an abstract simplicial complex on a set $X$ is a collection $\Delta$ of subsets of $X$, such that:
\begin{enumerate}
 \item $\{x\} \in \Delta, \mbox{ for all } x \in X$
 \item if $S \subseteq T \in \Delta$, then $S \in \Delta$.
\end{enumerate}

The following theorem motivates our result for the case $W = A_n$.

\begin{theorem}
\label{Anresult}
Let $\mathcal{M}(\mathscr{A}_{n,k})$ be the complement of the $k$-equal arrangement $\mathscr{A}_{n,k}$. 
Let $\mathscr{C}(A_n)$ be the order complex of the Boolean lattice.

Then $\pi_1(\mathcal{M}(\mathscr{A}_{n,k})) \cong A_1^{n-k+1}(\mathscr{C}(A_n))$, 
where $A_1^q$ is a discrete homotopy group, to be defined below.

\end{theorem}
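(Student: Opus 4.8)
The plan is to identify both $\pi_1(\mathcal{M}(\mathscr{A}_{n,k}))$ and $A_1^{n-k+1}(\mathscr{C}(A_n))$ with the fundamental group of a single combinatorial $2$-complex built on the Cayley graph of $S_{n+1}$ with respect to its simple reflections. The first step replaces the arrangement complement by a complement of a subcomplex of $\mathscr{C}(A_n)$. Since the subspaces of $\mathscr{A}_{n,k}$ are linear, $\mathcal{M}(\mathscr{A}_{n,k})$ is invariant under positive scaling and hence deformation retracts onto its intersection $M$ with the unit sphere $S^{n-1}$ of $V=\{\sum x_i=0\}\cong\mathbb{R}^n$. The braid (Coxeter) arrangement of type $A_n$ triangulates this sphere, and the resulting complex is $\mathscr{C}(A_n)$: the relative interior of the simplex on a chain $S_1\subsetneq\cdots\subsetneq S_j$ of proper nonempty subsets of $[n+1]$ is the open arrangement face on which the coordinates are constant along the blocks of the ordered set partition $(S_1\mid S_2\setminus S_1\mid\cdots\mid[n+1]\setminus S_j)$. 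A point of $S^{n-1}$ lies on some subspace of $\mathscr{A}_{n,k}$ exactly when some block of its ordered set partition has size $\ge k$, and passing to a face of a chain only merges blocks; hence the chains with a block of size $\ge k$ form a subcomplex $\mathscr{D}_{n,k}\subseteq\mathscr{C}(A_n)$ and $M=|\mathscr{C}(A_n)|\setminus|\mathscr{D}_{n,k}|$. A short count shows $\dim\mathscr{D}_{n,k}=n-k$, so $\mathscr{D}_{n,k}$ has codimension $k-1$ in $S^{n-1}$.

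The second step computes $\pi_1(M)=\pi_1(\mathcal{M}(\mathscr{A}_{n,k}))$ from a presentation. Deformation retract $M$ onto the complementary regular neighborhood of $\mathscr{D}_{n,k}$ --- equivalently, onto the union of the dual cells of the simplices of $\mathscr{C}(A_n)$ that do not lie in $\mathscr{D}_{n,k}$ --- and read a presentation off its $2$-skeleton; on the level of transversality, a generic loop in $S^{n-1}$ avoids the codimension-$(k-1)$ complex $\mathscr{D}_{n,k}$, and a generic null-homotopy of such a loop meets $\mathscr{D}_{n,k}$ only over its top strata. One finds that the generators may be taken to be the edges of the dual (chamber) graph of $\mathscr{C}(A_n)$, modulo a spanning tree; this graph is the Cayley graph of $S_{n+1}$ on the simple reflections $s_i$, so the generators are the $s_i$ subject to $s_i^2=1$ and $s_is_j=s_js_i$ for $|i-j|>1$ --- the commuting relations coming from the embedded $4$-cycles of the Cayley graph, which surround the codimension-$2$ faces of $\mathscr{C}(A_n)$ of ``two blocks of size $2$'' type, and such faces lie outside $\mathscr{D}_{n,k}$ precisely when $k\ge 3$ --- while a braid relation $s_is_{i+1}s_i=s_{i+1}s_is_{i+1}$, whose hexagon surrounds a ``one block of size $3$'' codimension-$2$ face, appears only when $k\ge 4$. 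Since a closed walk in the Cayley graph based at a chamber returns to that chamber, its word lies in $\ker(\varphi'\colon W'\to W)$, and every element of that kernel arises this way; thus $\pi_1(\mathcal{M}(\mathscr{A}_{n,k}))\cong\ker(\varphi')$ --- for $k=3$ this is Khovanov's pure triplet group \cite{khovanov}, and for $k=2$ or $k\ge 4$ it is trivial. The same description now applies verbatim to $A_1^{n-k+1}(\mathscr{C}(A_n))$: with $q=n-k+1$ its underlying graph $\Gamma^q$ has the chambers as vertices and joins two chambers that meet in a face of codimension $\le k-2$, and $A_1^q$ is the fundamental group of $\Gamma^q$ with all $3$- and $4$-cycles filled by $2$-cells. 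For $k=3$, $\Gamma^q$ is again the Cayley graph of $S_{n+1}$, which has no $3$-cycles (a relation $s_as_bs_c=1$ is impossible, the identity being even) and whose embedded $4$-cycles are exactly the squares $(s_is_j)^2$ with $|i-j|>1$, so $A_1^{n-2}(\mathscr{C}(A_n))$ has the presentation just found; for $k=2$, $\Gamma^q$ is edgeless, and for $k\ge 4$ the extra edges and cycles of $\Gamma^q$ contribute precisely the braid relations, so $A_1^q$ is trivial too. Comparing presentations --- or, more cleanly, identifying both groups with $\ker(\varphi')$ --- gives the isomorphism.

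The heart of the argument, and the main obstacle, is the middle step: verifying that $\pi_1$ of the complement of the subcomplex $\mathscr{D}_{n,k}$ in the sphere really does admit the ``dual graph modulo codimension-$2$ link relations'' presentation, with no contribution from the higher-dimensional faces of $\mathscr{D}_{n,k}$. This is a regular-neighborhood and transversality argument producing a meridian-type (Wirtinger-type) presentation for the complement of a codimension-$\ge 2$ complex, and it relies on the combinatorial fact that the relevant links in $\mathscr{C}(A_n)$ are the expected polygons; getting the identification of dual-graph edges with the $s_i$ and the orientation bookkeeping exactly right, and ruling out ``long'' embedded $4$-cycles of $\Gamma^q$ not coming from a single codimension-$2$ face, is where the real work lies. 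For $k=3$ one can shorten this by invoking Khovanov's theorem \cite{khovanov} for the left-hand side and only computing $A_1^{n-2}(\mathscr{C}(A_n))$ directly; the cases $k=2$ and $k\ge 4$ are degenerate, both sides being trivial.
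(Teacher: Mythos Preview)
Your outline is essentially correct and shares its overall shape with the paper's argument (given for general $W$ as Theorem~\ref{mainresult}, of which this statement is the type-$A$ case): retract the complement to the sphere, identify what is removed as a subcomplex $\Delta_0$ of the Coxeter complex, and compare a CW model of $|\mathscr{C}(W)|\setminus|\Delta_0|$ with the complex $X_\Gamma$ of Proposition~\ref{comptograph}. The substantive difference lies exactly in the step you flag as the obstacle.

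You propose a transversality/regular-neighborhood argument to extract a Wirtinger-type presentation for $\pi_1$ of the complement, then separately compute a presentation for $A_1^{n-k+1}$, and match both with $\ker\varphi'$. The paper avoids presentations entirely at this stage. It invokes a result of Bj\"orner and Ziegler: if $\Delta_0$ is a subcomplex of a simplicial sphere $\Delta$ with face posets $P_0\subset P$, then $|\Delta|\setminus|\Delta_0|$ is homotopy equivalent to a CW complex $X$ whose face poset is $(P\setminus P_0)^*$. One then checks that the maximal faces of $\Delta_0$ are precisely the cosets $wW_{\{s,t\}}$ with $m(s,t)>2$, so the $1$-skeleton of $X$ is the Cayley graph $\Gamma^{n-2}(\mathscr{C}(W))$ and its $2$-cells are exactly the cosets $wW_{\{s,t\}}$ with $m(s,t)=2$, bounded by $4$-cycles. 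Since the Cayley graph has no $3$-cycles, the $2$-skeleton of $X$ coincides with $X_\Gamma$, and Proposition~\ref{comptograph} gives $\pi_1(X)\cong A_1^{n-2}(\mathscr{C}(W))$ directly.

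This buys two things. First, the Bj\"orner--Ziegler black box replaces your dual-cell/transversality bookkeeping wholesale. Second, the isomorphism $\pi_1\cong A_1^{n-k+1}$ is established without ever passing through $\ker\varphi'$; in the paper that identification is a separate, subsequent theorem (Theorem~\ref{purepar}), proved by analyzing how the discrete homotopy moves (T1)--(T3) act on words. Your route conflates the two results, which is valid but does strictly more work. For $k>3$ the paper, like you, shows both sides are trivial, but on the discrete side it argues via the surjection $A_1^{n-2}\to A_1^{n-k+1}$ and contracts the generating loops $g((st)^{m(s,t)})$ explicitly, rather than by cataloguing the additional edges and cycles of $\Gamma^{n-k+1}$.
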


This result was shown independently by Bj\"orner \cite{bjorner-atheory} and Babson (appears in \cite{perspectives}) in 2001). It turns out that the order complex of the Boolean lattice 
is the Coxeter complex of type $A$, which explains our choice of notation.

One of the original motivations for discrete homotopy theory was to create a sequence of groups 
for studying social networks being modeled as simplicial complexes. 
As Theorem \ref{Anresult} shows, 
discrete homotopy theory has applications in studying simplicial complexes arising from geometry as well a social networks. 
We will show that there is an isomorphism between $\pi_1(\mathcal{M}(\mathscr{W}_{n,k}))$ 
and the discrete fundamental group, $A_1^{n-k+1}$, of the Coxeter complex, 
a combinatorial structure associated to the Coxeter arrangement. 
Essentially, we are replacing a topologically defined group with a combinatorially defined group. 
First, however, we give an overview of some of the needed basic definitions and results from discrete homotopy theory. 
Many details and background history of discrete homotopy theory can be found in \cite{foundations}, 
while a categorical approach is given in \cite{graph-homotopy}.

Fix a positive integer $d$. Let $\Delta$ be a simplicial complex of dimension $d$, 
fix $0\le q \le d$, and let $\sigma_0 \in \Delta$ be maximal with dimension $\geq q$. 
Two simplicies $\sigma$ and $\tau$ are \emph{$q$-near} if they share $q+1$ elements. 
A \emph{$q$-chain} is a sequence $\sigma_1, \ldots, \sigma_k$, such that 
$\sigma_i, \sigma_{i+1}$ are $q$-near for all $i$. A \emph{$q$-loop} based at $\sigma_0$ is a $q$-chain 
with $\sigma_1 = \sigma_k = \sigma_0$.

\begin{figure}[htbp]
  \center{\includegraphics[height= 4cm]{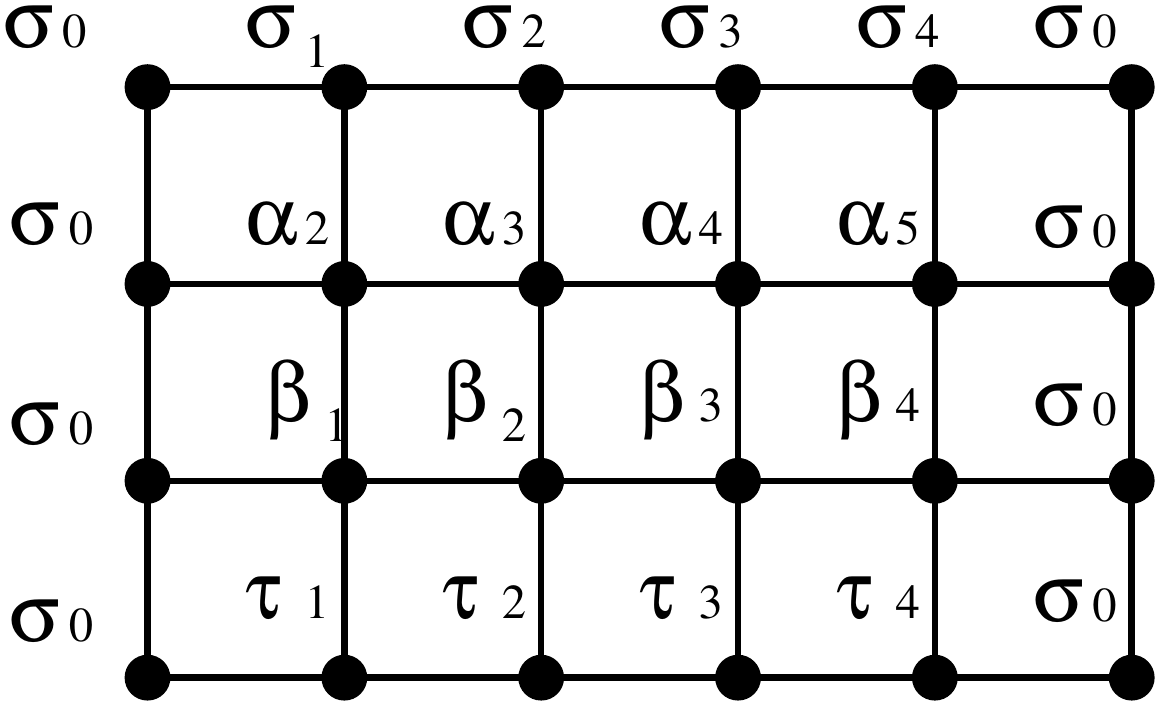}}
  \caption{An example of a homotopy grid}
\label{fig:grid}
\end{figure}

\begin{definition}
We define an equivalence relation, $\simeq_q$ on $q$-loops with the following conditions: 

\begin{enumerate}
\item The $q$-loop
\begin{equation*}
(\sigma)=(\sigma_0, \sigma_1, \ldots,\sigma_i, \sigma_{i+1},\ldots, \sigma_n, \sigma_0)
\end{equation*}
is equivalent to the $q$-loop
\begin{equation*}
(\sigma)'=(\sigma_0, \sigma_1, \ldots, \sigma_i, \sigma_i, \sigma_{i+1}, \ldots, \sigma_n, \sigma_0),
\end{equation*}
 which we refer to as stretching.

\item If $(\sigma)$ and $(\tau)$ are two $q$-loops that have the same length then they are equivalent 
if there is a diagram as in figure \ref{fig:grid}. The vertices represent simplices, 
and two vertices are connected by an edge if and only if the corresponding simplices are $q$-near. 
Thus, every row is a $q$-loop based at $\sigma_0$, and every column is a $q$-chain. 
Such a diagram is called a (discrete) homotopy between $(\sigma)$ and $(\tau)$.
\end{enumerate}
\end{definition}

When there is no possibility for confusion, we will use $(\sigma) \simeq (\tau)$ to mean $(\sigma) \simeq_q (\tau)$. 
Also we may use the term $q$-homotopy when necessary to avoid confusion (in Section 5 this will be highly necessary).

Define $A_1^q(\Delta, \sigma_0)$ to be the collection of equivalence classes of $q$-loops based at $\sigma_0$. 
Then the operation of concatenation of $q$-loops gives a group operation on $A_1^q(\Delta, \sigma_0)$, 
the \emph{discrete homotopy group} of $\Delta$. The identity is the equivalence class containing the trivial loop 
$(\sigma_0)$, and given an equivalence class $[\sigma]$ for the 
$q$-loop $(\sigma) = (\sigma_0, \sigma_1, \ldots, \sigma_k, \sigma_0)$, 
the inverse $[\sigma]^{-1}$ is the equivalence class of 
$(\sigma_0, \sigma_k, \sigma_{k-1}, \ldots, \sigma_2, \sigma_1, \sigma_0)$. 
As in classical topology, if a pair of maximal simplices $\sigma$, $\tau$ of dimension at least $q$ 
in $\Delta$ are $q$-connected, then $A_1^q(\Delta, \sigma) \cong A_1^q(\Delta, \tau)$. 
Thus, in the case $\Delta$ is $q$-connected, we will set $A_1^q(\Delta) = A_1^q(\Delta, \sigma_0)$ 
for any maximal simplex $\sigma_0 \in \Delta$ of dimension at least q. 

Before we use discrete homotopy theory, 
we need a result from \cite{foundations} that relates discrete homotopy theory of a simplicial complex 
to classical homotopy theory of a related space. Given $0 \le q \le d$, let $\Gamma^q(\Delta)$ be a graph 
whose vertices are maximal simplices of $\Delta$ of size at least $q$, and with edges between two simplices 
$\sigma$, $\tau$, if and only if $\sigma$ and $\tau$ are $q$-near. 
Then the following result relates $A_1^q(\Delta, \sigma_0)$ in terms of a cell complex related to $\Gamma^q(\Delta)$.

\begin{proposition}[Proposition 5.12 in \cite{foundations}]
\label{comptograph}
\begin{equation*}
A_1^q(\Delta, \sigma_0) \cong \pi_1(X_{\Gamma}, \sigma_0)
\end{equation*}
where $X_{\Gamma}$ is a cell complex obtained by gluing a $2$-cell on each $3$- and $4$-cycle of $\Gamma=\Gamma^q(\Delta)$.
\end{proposition}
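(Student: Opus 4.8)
The plan is to construct an explicit isomorphism $\Psi \colon A_1^q(\Delta,\sigma_0) \to \pi_1(X_\Gamma,\sigma_0)$, where $\Gamma = \Gamma^q(\Delta)$ is the $1$-skeleton of $X_\Gamma$. The vertices of $\Gamma$ are exactly the maximal simplices of $\Delta$ of dimension at least $q$, and two such simplices are $q$-near precisely when they are equal or adjacent in $\Gamma$; thus a $q$-loop $(\sigma_0,\sigma_1,\ldots,\sigma_k,\sigma_0)$ becomes a based edge-loop in $\Gamma$ once repeated consecutive entries are deleted, and this assignment defines $\Psi$ on representatives. Throughout I would use the group structure on $A_1^q(\Delta,\sigma_0)$ already recorded above, together with the routine facts that $\simeq_q$ is compatible with concatenation and that $p\ast p^{-1}\simeq_q(\sigma_0)$ for every $q$-chain $p$ — all immediate from the stretching and homotopy-grid moves.

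To see that $\Psi$ is a well-defined homomorphism, note first that a stretching move inserts a constant path into the edge-loop and so does not change its class in $\pi_1(X_\Gamma,\sigma_0)$. For a homotopy grid it suffices to treat two adjacent rows, and here I would run the standard staircase interpolation: the intermediate edge-loop $q_m$ follows row $i$ through column $m$, drops to row $i+1$, and finishes along row $i+1$; then $q_0,q_n$ are (up to a constant) the two rows, while $q_m$ and $q_{m+1}$ differ only by replacing the arc $\sigma_{i,m}\to\sigma_{i+1,m}\to\sigma_{i+1,m+1}$ by the arc $\sigma_{i,m}\to\sigma_{i,m+1}\to\sigma_{i+1,m+1}$. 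The four simplices $\sigma_{i,m},\sigma_{i,m+1},\sigma_{i+1,m+1},\sigma_{i+1,m}$ form a possibly degenerate $4$-cycle in $\Gamma$ — its four sides are forced to be edges by the row and column conditions, though its diagonals are not — so it is a $4$-cycle, a $3$-cycle, a doubled edge, or a point, and in every case the attached $2$-cell (or a backtrack cancellation in the degenerate cases) makes the two arcs homotopic rel endpoints in $X_\Gamma$. Hence $\Psi$ is well-defined; compatibility with concatenation is immediate. Surjectivity is then formal: by cellular approximation every class in $\pi_1(X_\Gamma,\sigma_0)$ is represented by a based edge-loop in $\Gamma$, and every such edge-loop is the image of a $q$-loop.

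For injectivity, suppose a $q$-loop $(\sigma)$ maps to the trivial class, i.e.\ its edge-loop $w$ is null-homotopic in $X_\Gamma$. By the standard combinatorial (van Kampen type) description of null-homotopy in a $2$-complex, $w$ is carried to the trivial loop by a finite sequence of elementary moves: (i) insertion or deletion of a backtrack $\tau,\mu,\tau$, and (ii) insertion or deletion of a conjugate $p\ast C\ast p^{-1}$, where $C$ is the boundary $3$- or $4$-cycle of a $2$-cell and $p$ a path from $\sigma_0$ to a vertex of $C$. Using compatibility of $\simeq_q$ with concatenation and $p\ast p^{-1}\simeq_q(\sigma_0)$, it is enough to realize (i) as a $\simeq_q$-equivalence and to show that each $3$-cycle and $4$-cycle, based at one of its vertices, is $\simeq_q$-trivial. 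Move (i) is the two-row grid with columns $(\tau,\tau),(\tau,\mu),(\tau,\tau)$, constant elsewhere. A $3$-cycle $\alpha,\beta,\gamma$ gives $(\alpha,\beta,\gamma,\alpha)\simeq_q(\alpha,\alpha,\alpha,\alpha)$ via the two-row grid with columns $(\alpha,\alpha),(\alpha,\beta),(\alpha,\gamma),(\alpha,\alpha)$, using that $\gamma$ is $q$-near $\alpha$. For a $4$-cycle $\alpha,\beta,\gamma,\delta$ the decisive point is that $\beta$ and $\delta$ are both $q$-near $\alpha$: the three-row grid with rows $(\alpha,\alpha,\alpha,\alpha,\alpha)$, $(\alpha,\alpha,\beta,\alpha,\alpha)$, $(\alpha,\beta,\gamma,\delta,\alpha)$ — whose interior columns $(\alpha,\alpha,\beta),(\alpha,\beta,\gamma),(\alpha,\alpha,\delta)$ are all $q$-chains — gives $(\alpha,\beta,\gamma,\delta,\alpha)\simeq_q(\alpha,\alpha,\alpha,\alpha,\alpha)$. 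Hence $(\sigma)\simeq_q(\sigma_0)$ and $\Psi$ is injective.

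I expect the main obstacle to be the injectivity argument, on two counts. First, one must invoke cleanly the combinatorial normal form for null-homotopies in $X_\Gamma$, so that the reduction to moves (i) and (ii) is genuinely justified rather than merely asserted. Second, the $4$-cycle case is subtle: it is tempting to believe a chordless $4$-cycle cannot be contracted by grid moves — every attempt to erase it along its own boundary fails because opposite vertices need not be $q$-near — but the three-row grid above succeeds by routing the ``far'' vertex $\gamma$ through the middle column as $\gamma\to\beta\to\alpha$ instead of trying to delete it, and it is precisely this phenomenon that singles out $3$- and $4$-cycles, and no larger cycles, as the correct ones to fill.
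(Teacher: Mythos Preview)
The paper does not prove this proposition; it is quoted verbatim as Proposition~5.12 from \cite{foundations} and used as a black box throughout Sections~3--5. There is therefore no proof in the paper to compare your attempt against.

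That said, your argument is essentially correct and is the natural one. One minor technical gap: the paper's definition of a $q$-chain does not require the simplices $\sigma_i$ to be maximal, only that consecutive pairs share $q+1$ elements, whereas the vertices of $\Gamma^q(\Delta)$ are the \emph{maximal} simplices of dimension at least $q$. Before sending a $q$-loop to an edge-loop in $\Gamma$, you should first observe that any $q$-loop is $\simeq_q$-equivalent to one consisting entirely of maximal simplices: replace each $\sigma_i$ by a maximal simplex $\hat\sigma_i\supseteq\sigma_i$ (this preserves $q$-nearness of consecutive pairs since $\sigma_i\cap\sigma_{i+1}\subseteq\hat\sigma_i\cap\hat\sigma_{i+1}$), and a single two-row grid gives the equivalence. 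You also need the resulting class in $\pi_1(X_\Gamma)$ to be independent of these choices; this follows because two maximal simplices $\hat\sigma_i,\hat\sigma_i'$ containing the same $\sigma_i$ are $q$-near each other and $q$-near both $\hat\sigma_{i\pm1}$, so swapping one for the other changes the edge-loop across a (possibly degenerate) $4$-cycle, which bounds in $X_\Gamma$.

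With that patch, your well-definedness, surjectivity, and injectivity arguments go through. In particular, the three-row grid you give for contracting a $4$-cycle $(\alpha,\beta,\gamma,\delta,\alpha)$ is exactly the crux of the result, and your checking of the column conditions is correct: the point is that the middle column $(\alpha,\beta,\gamma)$ is a $q$-chain even though $\alpha$ and $\gamma$ need not be $q$-near, so the far vertex $\gamma$ is erased via $\beta$ rather than directly. This is precisely why $3$- and $4$-cycles, and no longer cycles, are the ones that must be filled.
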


We now briefly review the definition of a Coxeter complex and the associated polytope call a $W$-Permutahedron.

Let $W, \mathcal{H}(W), \Phi, \Pi, S$ be as in section 2. 
As mentioned previously, we study the discrete homotopy groups of the \emph{Coxeter complex} 
associated to $W$, and relate them to $\pi_1(\mathcal{M}(\mathscr{W}_{n,k}))$. The majority of these details 
can be found in Section 1.14 in Humphrey's book on Coxeter groups \cite{coxbook}. The concepts regarding fans and zonotopes 
can be found in Chapter 7 of Zeigler's book on polytopes \cite{polybook}.

For a given set $I \subseteq S$, let $W_I = <I>$, and $\Pi_I = \{ \alpha \in \Pi: s_{\alpha} \in I \}$. 
We can associate to $W_I$ the set of points 
$C_I = \{ x \in \mathbb{R}^n: (x,\alpha) = 0, \forall \alpha \in \Pi_I, \mbox{ and } (x, \alpha) > 0, \forall \alpha \in \Pi - \Pi_I \}$.
 The set $C_I$ is the intersection of hyperplanes $H_{\alpha}$ for $\alpha \in \Pi_I$
 with certain open half-spaces. We see that $C_{\emptyset}$ corresponds to the interior of a fundamental region, 
and $C_{S}$ is the origin.

For a given coset $wW_I$, we can associate the set of points $wC_I$. 
The collection $\mathscr{C}(W)$ of $wC_I$ for all $w \in W,$ and all $ I \subseteq S$ partitions 
$\mathbb{R}^n$, and is called the \emph{Coxeter complex} of $W$. The face poset of the Coxeter complex 
can be viewed as the collection of cosets $wW_I$ for any $w \in W, I \subseteq S$, ordered by \emph{reverse inclusion}. 
We note that this poset is \emph{not} $\mathcal{L}(\mathscr{H}(W))$. 
 For $W = A_n$, we have already mentioned that $\mathcal{L}(\mathscr{H}(W))$ is isomorphic to the partition lattice. 
The face poset of the braid arrangement, however, is isomorphic to the order complex of the boolean lattice. 
Since chains in the boolean lattice are in one-to-one correspondence with 
\emph{ordered} set partitions, these two posets are related, but are very different.

\begin{figure}[htbp]
  \center{\scalebox{.7}{\input{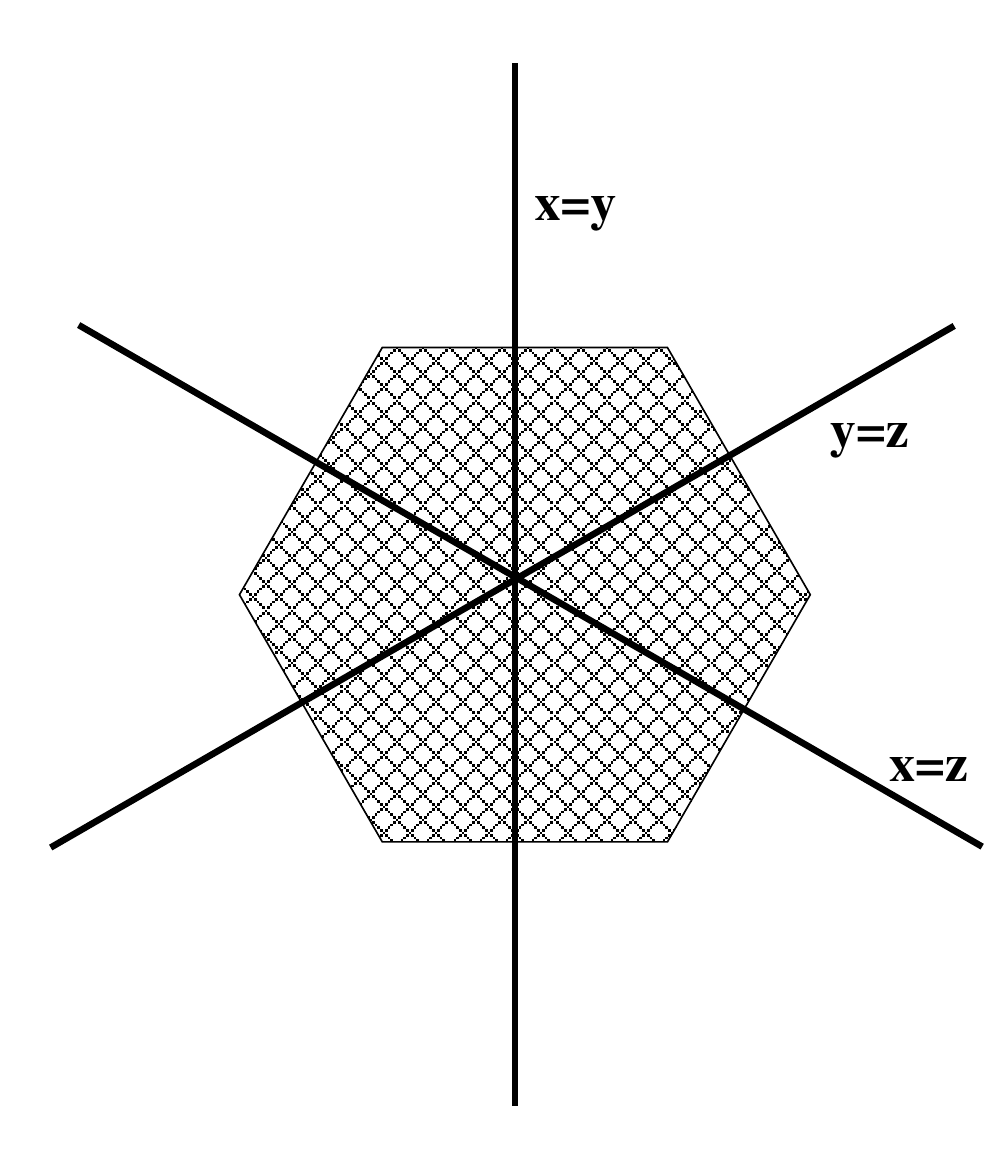_t}}}
  \caption{Coxeter Complex and Zonotope for $W = A_3$. Note that $C_S$ is the origin.}
\end{figure}

For a given $w, I$, the closure of $wC_I$ is a convex polyhedral cone. 
In fact, the collection of all $w\bar{C}_I$ forms a fan of $\mathbb{R}^n$, 
which is the fan associated to $\mathscr{H}(W)$. Under this view, the sets $w\bar{C}_I$ are the faces of the arrangement.

Recall that we can associate a \emph{zonotope} to a hyperplane arrangement. 
That is, given line segments of unit length normal to the hyperplanes, one can form a polytope 
by taking the Minkowski sum of these line segments. For a Coxeter arrangement of type $W$ 
this zonotope is called the $W$-Permutahedron. Also, the fan of the arrangement is the normal fan of the zonotope. 
Thus, we can label the faces of the $W$-Permutahedron by cosets $wW_I$, 
where a face $F$ gets the label $wW_I$ if the normal cone for $F$ is $w\bar{C}_I$. 
Under this labeling, the face poset of the $W$-Permutahedron is indexed by cosets
 $wW_I$ for all $w \in W, I \subseteq S$, ordered by \emph{inclusion}. 

We observe that in the $W$-Permutahedron, the vertices correspond to elements of $W$, 
and two vertices share an edge if and only if the corresponding regions share an $(n-1)$-dimensional boundary, 
that is if and only if the corresponding elements of $W$ differ by multiplication on the right by a simple reflection. 
From this it follows that the graph (one-skeleton) of the $W$-Permutahedron is the graph 
$\Gamma^{n-2}(\mathscr{C}(W))$ defined before Proposition \ref{comptograph}.

We also characterize the cycles that are boundaries of $2$-faces in the $W$-Permutahedron. 
Given a $2$-dimensional face $F$ and a vertex $w$ in $F$, we see that one edge adjacent 
to $w$ in $F$ is of the form $w, ws$ for some $s \in S$. Likewise, one of the two edges of $F$ incident 
to the edge $w, ws$ is the edge $ws, wst$, where $t \in S - s$. Thus we see that the coset associated 
to the normal cone of $F$ contains both $wW_{s}$ and $wsW_{t}$. Likewise, it is the smallest coset to 
contain these two cosets, so the corresponding coset is given by $wW_{\{s,t\}}$. The cycle that is the 
boundary of $F$ is seen to have length $2m(s,t)$. This means that the graph has no 3-cycles, and 4-cycles 
are boundaries of faces which correspond to a coset of $W_{\{s, t \}}$, where $s, t \in S$ and $m(s,t) = 2$. 
The fact that the graph has no $3$-cycles will turn out to be useful in section 4.

Now we turn to the main result of this section.

\begin{theorem}
\label{mainresult}
Let $\mathcal{M}(\mathscr{W}_{n,k})$ be the complement of the $k$-parabolic arrangement $\mathscr{W}_{n,k}$.

Then $\pi_1(\mathcal{M}(\mathscr{W}_{n,k})) \cong A_1^{n-k+1}(\mathscr{C}(W))$.

\end{theorem}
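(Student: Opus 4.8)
The goal is to identify $\pi_1$ of the real complement $\mathcal{M}(\mathscr{W}_{n,k})$ with the discrete fundamental group $A_1^{n-k+1}(\mathscr{C}(W))$. The natural strategy is to interpolate through the \emph{Salvetti-type} model: the complement $\mathcal{M}(\mathscr{W}_{n,k})$ deformation-retracts onto a regular CW complex built from the faces of the Coxeter arrangement, and I want to show its $1$- and $2$-skeleton match the cell complex $X_\Gamma$ of Proposition~\ref{comptograph} for $\Gamma = \Gamma^{n-2}(\mathscr{C}(W))$ (since $q = n-k+1$ and for $k=3$ this is $n-2$; I will write the argument for general $k$, keeping $q = n-k+1$). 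Concretely, I would (i) stratify $\mathbb{R}^n$ by the faces $w\bar C_I$ of the Coxeter fan, (ii) observe that $\mathscr{W}_{n,k}$ is a union of some of these closed faces — precisely the faces of codimension $\ge k-1$ whose Galois group is an irreducible parabolic of rank $\ge k-1$ — so that removing $\mathscr{W}_{n,k}$ leaves a union of open faces of the Coxeter stratification, (iii) apply a regular-CW / nerve argument to replace $\mathcal{M}(\mathscr{W}_{n,k})$ by a finite complex whose cells are indexed by incident pairs (face of the arrangement, chamber containing it in its closure), exactly as in Salvetti's construction for the hyperplane case, and (iv) read off the fundamental group from the $2$-skeleton.

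\textbf{Key steps.} First I would make precise which faces $w\bar C_I$ lie in $\mathscr{W}_{n,k}$: a face $wC_I$ has linear span $w\,\mathrm{Fix}(W_I)$ and $\mathrm{Gal}$ equal to a conjugate of $W_I$; so $wC_I \subseteq X$ for some $X \in \mathscr{W}_{n,k}$ iff $W_I$ contains an irreducible rank-$(k-1)$ parabolic, i.e.\ iff the Coxeter graph of $I$ has a connected sub-diagram on $k-1$ nodes — equivalently $|I| \ge k-1$ and $I$ is not a disjoint union of components each of size $< k-1$. This gives a combinatorial description of the stratified complement $\mathcal{M}(\mathscr{W}_{n,k})$ as the union of open cones $wC_I$ over the ``allowed'' $I$. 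Second, I would invoke the standard fact (Björner–Ziegler, or the general position/regular neighborhood argument used by Khovanov for the $3$-equal case) that the complement of such a $W$-invariant union of coordinate-type subspaces deformation-retracts onto a regular CW complex $\mathcal{S}$ whose vertices are the chambers (elements of $W$), whose edges connect chambers sharing a panel $wC_{\{s\}}$ with $C_{\{s\}}$ allowed — but every single simple reflection gives an allowed face only when $k=2$; for $k\ge 3$ all panels survive since $|\{s\}| = 1 < k-1$, so the edge set of $\mathcal{S}^{(1)}$ is exactly adjacency of chambers in the $W$-permutahedron, which the excerpt already identifies with $\Gamma^{n-2}(\mathscr{C}(W))$. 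Third, and this is where the value of $k$ enters, I analyze the $2$-cells: a $2$-cell of the Salvetti complex comes from a codimension-$2$ face $wC_{\{s,t\}}$ that survives in the complement, i.e.\ with $\{s,t\}$ \emph{not} containing an irreducible rank-$(k-1)$ parabolic. For $k=3$ this means $m(s,t)=2$ (so $W_{\{s,t\}} \cong A_1\times A_1$, reducible); for such a pair the $2$-cell is attached along the $4$-cycle around $wW_{\{s,t\}}$. Hence $\mathcal{S}^{(2)}$ is obtained from the permutahedron graph by gluing a $2$-cell on exactly the $4$-cycles coming from commuting pairs — and since the graph has no $3$-cycles, $\mathcal{S}^{(2)}$ is precisely $X_\Gamma$ from Proposition~\ref{comptograph}. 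Finally, $\pi_1(\mathcal{M}(\mathscr{W}_{n,3})) \cong \pi_1(\mathcal{S}) \cong \pi_1(\mathcal{S}^{(2)}) \cong \pi_1(X_\Gamma) \cong A_1^{n-2}(\mathscr{C}(W))$, which is the claim.

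\textbf{Main obstacle.} The routine parts are the diagram-chasing identification of allowed faces and the matching of $2$-cells with $4$-cycles; the genuinely delicate step is (iii), establishing that $\mathcal{M}(\mathscr{W}_{n,k})$ deformation-retracts onto the claimed regular CW complex with the correct cell structure. Unlike the hyperplane (Salvetti) case this is not a codimension-$1$ arrangement, so one cannot directly cite Salvetti's theorem; I expect to need either a direct geometric construction (build the retraction face-by-face over the Coxeter stratification, pushing the complement onto a ``dual'' skeleton of the permutahedron, as Khovanov does for the $3$-equal arrangement and as Davis–Januszkiewicz–Scott do via blow-ups) or an inductive argument on the stratification poset. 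Controlling that the retraction is compatible with the $W$-action and that the attaching maps of $2$-cells are the $4$-cycles (not some longer cycle) is the crux. A secondary subtlety is checking $q$-connectivity of $\mathscr{C}(W)$ for $q=n-k+1$ so that $A_1^q(\mathscr{C}(W))$ is basepoint-independent — this should follow from connectivity of the permutahedron graph, but it needs to be stated.
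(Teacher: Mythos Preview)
For $k=3$ your plan is essentially the paper's own argument. The ``main obstacle'' you flag---building the deformation retraction onto a regular CW complex with the right cell structure---is exactly what the paper resolves by invoking a result of Bj\"orner and Ziegler: if $\Delta_0$ is a subcomplex of a simplicial sphere $\Delta$, then $|\Delta|\setminus|\Delta_0|$ is homotopy equivalent to a CW complex whose face poset is $(P\setminus P_0)^*$. Applying this with $\Delta=\mathscr{C}(W)$ (viewed on $\mathbb{S}^{n-1}$) and $\Delta_0=\mathscr{W}_{n,3}\cap\mathbb{S}^{n-1}$ yields your complex $\mathcal{S}$ off the shelf, and the identification of its $2$-skeleton with $X_\Gamma$ proceeds exactly as you sketch. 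So you correctly anticipated both the route and the one nontrivial ingredient.

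Where your proposal has a genuine gap is the case $k>3$, which the theorem also claims. Your attempt to run the same argument ``for general $k$, keeping $q=n-k+1$'' breaks down at the $1$-skeleton: the Salvetti-type complex $\mathcal{S}$ still has as $1$-skeleton the permutahedron graph $\Gamma^{n-2}(\mathscr{C}(W))$ (since, as you note, every panel survives for $k\ge 3$), but the graph required by Proposition~\ref{comptograph} is $\Gamma^{n-k+1}(\mathscr{C}(W))$, which for $k>3$ has strictly \emph{more} edges---two chambers are $(n-k+1)$-near whenever they share a face of codimension at most $k-2$. The $3$- and $4$-cycles of $\Gamma^{n-k+1}$ are therefore not those of the permutahedron graph, so there is no direct match between $\mathcal{S}^{(2)}$ and $X_{\Gamma^{n-k+1}}$. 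The paper does not try to repair this; instead it treats $k>3$ by an entirely separate argument in Section~5: $\pi_1(\mathcal{M}(\mathscr{W}_{n,k}))$ is trivial by Bj\"orner--Welker (the subspaces have codimension $k-1\ge 3$, so the complement is simply connected), and $A_1^{n-k+1}(\mathscr{C}(W))$ is shown to be trivial by exhibiting explicit $(n-k+1)$-homotopies contracting the generating loops $(st)^{m(s,t)}$. You should add this case, or at least note that your Salvetti model only settles $k=3$.
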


We need the following proposition, which is a special case of Proposition 3.1 from Bj\"orner and Ziegler in \cite{bjorner-ziegler}.

\begin{proposition}

Let $\Delta$ be a simplicial decomposition of the $k$-sphere, and let $\Delta_0$ be a subcomplex of $\Delta$. 
Let $P$ be the face poset of $\Delta$, and let $P_0$ be the (lower) order ideal generated by $\Delta_0$. 
Then $|\Delta| \backslash |\Delta_0|$ is homotopy equivalent to a CW complex $X$, 
and moreover, the face poset of $X$ is $(P\backslash P_0)^*$, where $*$ denotes taking the dual poset.

\end{proposition}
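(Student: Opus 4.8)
The plan is to prove this via the \emph{dual block decomposition} of the sphere $|\Delta|$, keeping only the blocks dual to faces lying outside $\Delta_0$. Concretely, I would pass to the barycentric subdivision $\mathrm{sd}\,\Delta$, whose vertices are the faces of $\Delta$ and whose simplices are the chains $\sigma_0<\cdots<\sigma_m$ in $P$, and for each face $\sigma$ let $D(\sigma)$ be the subcomplex of $\mathrm{sd}\,\Delta$ spanned by all chains whose minimum is $\ge\sigma$. Three elementary facts drive everything: $D(\sigma)$ is a cone on the barycenter $\hat\sigma$ (hence contractible); $D(\tau)\subseteq D(\sigma)$ iff $\sigma\le\tau$, while $D(\sigma)\cap D(\tau)$ equals $D(\sigma\cup\tau)$ when $\sigma\cup\tau\in\Delta$ and is empty otherwise; and the relative interiors $\mathring D(\sigma)$ partition $|\Delta|$. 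The candidate CW complex is then $X:=\bigcup_{\sigma\notin\Delta_0}D(\sigma)$.

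The first step is to show that $|\Delta|\setminus|\Delta_0|$ deformation retracts onto $|X|$. Because $\Delta_0$ is a subcomplex, $\{\sigma\in\Delta:\sigma\notin\Delta_0\}$ is an up-set in $P$, and one checks directly that a chain lies in some $D(\sigma)$ with $\sigma\notin\Delta_0$ exactly when its minimum is not in $\Delta_0$, whereas the open simplex of $\mathrm{sd}\,\Delta$ on a chain $\sigma_0<\cdots<\sigma_m$ is contained in $|\Delta_0|$ exactly when $\sigma_m\in\Delta_0$ (equivalently, again by the subcomplex property, when $\sigma_0\in\Delta_0$); hence $|X|$ is the union of those open simplices of $\mathrm{sd}\,\Delta$ whose carrying chain has minimum outside $\Delta_0$, and in particular $|X|\subseteq|\Delta|\setminus|\Delta_0|$. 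Writing each point as the convex combination $\sum_i t_i\hat\sigma_i$ on its carrying chain, the indices $i$ with $\sigma_i\notin\Delta_0$ form a final segment of the chain which is nonempty precisely off $|\Delta_0|$; the straight-line homotopy that rescales those coefficients to sum to $1$ while driving the remaining ones to $0$ is the desired retraction, and the only thing to verify is continuity across shared faces of $\mathrm{sd}\,\Delta$, which is the same routine computation as for the retraction of an open star onto its center. The second step is to read off the cell structure: using that $|\Delta|$ is the $k$-sphere --- more precisely, when $\Delta$ is a combinatorial sphere, which holds in all cases to which we apply this (boundary complexes of polytopes, Coxeter complexes) --- each $D(\sigma)$ is a PL $(k-\dim\sigma)$-ball with $\partial D(\sigma)=\bigcup_{\tau>\sigma}D(\tau)$, so $\{D(\sigma):\sigma\in\Delta\}$ is a regular CW decomposition of $|\Delta|$ whose closure order is $P$ reversed; restricting to the up-set $\{\sigma\notin\Delta_0\}$ exhibits $X$ as a regular CW subcomplex whose face poset, ordered by inclusion of closed cells, is $\{D(\sigma):\sigma\notin\Delta_0\}$ with $D(\tau)\le D(\sigma)\iff\sigma\le\tau$, that is, $(P\setminus P_0)^*$. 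Together with the retraction and Proposition~3.1 of \cite{bjorner-ziegler} this gives the statement.

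The step I expect to be the main obstacle is precisely the combinatorial-manifold point in the second step: a simplicial $k$-sphere need not be PL, and then the dual blocks need not be balls, so $X$ does not obviously carry a regular CW structure with the stated face poset. The safe route is to either add the (for us harmless) hypothesis that $\Delta$ is a combinatorial sphere, or to argue using contractibility alone: by the facts above $\{D(\sigma):\sigma\notin\Delta_0\}$ covers $|X|$ by contractible sets whose nonempty finite intersections are again of the form $D(\sigma\cup\tau\cup\cdots)$, hence contractible, so the nerve lemma computes the homotopy type of $X$, and one checks that the resulting nerve --- whose simplices are the subsets of $\Delta\setminus\Delta_0$ whose union is a face of $\Delta$ --- carries the poset $(P\setminus P_0)^*$ on its cells. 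Since every application in this paper involves a combinatorial sphere, I would invoke that case and cite \cite{bjorner-ziegler} for the general statement.
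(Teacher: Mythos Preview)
The paper does not supply a proof of this proposition; it is simply quoted as a special case of Proposition~3.1 of Bj\"orner--Ziegler \cite{bjorner-ziegler}, so there is no in-paper argument to compare against. Your dual-block construction via the barycentric subdivision is precisely the mechanism underlying that cited result, and both your deformation retraction of $|\Delta|\setminus|\Delta_0|$ onto $|X|$ and your identification of the face poset of $X$ with $(P\setminus P_0)^*$ are correct. One small slip: the parenthetical assertion that the open simplex of a chain $\sigma_0<\cdots<\sigma_m$ lies in $|\Delta_0|$ ``equivalently, when $\sigma_0\in\Delta_0$'' is false---only the implication $\sigma_m\in\Delta_0\Rightarrow\sigma_0\in\Delta_0$ holds---but you never use the false direction, and indeed the chains with $\sigma_0\in\Delta_0$ and $\sigma_m\notin\Delta_0$ are exactly the part of $|\Delta|\setminus|\Delta_0|$ that your straight-line retraction pushes into $|X|$. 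Your caveat about the PL hypothesis is well taken and harmless here: the Coxeter complex is the boundary complex of a simplicial polytope, hence a combinatorial sphere, so the dual blocks are genuine balls and $X$ is a regular CW complex as claimed.
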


\begin{proof}[Proof of Theorem \ref{mainresult}, when $k=3$]

A proof of the case when $k > 3$ will be the main topic of Section 5.

We can identify the Coxeter complex $\mathscr{C}(W)$ as the simplicial decomposition of the $(n-1)$-sphere $\mathbb{S}^{n-1}$ induced by
intersecting the sphere with the Coxeter arrangement for $W$. 
Let $\Delta_0$ be the subcomplex given by $\mathscr{W}_{n,3} \cap \mathbb{S}^{n-1}$. 
It is known that there is a strong deformation retract $f: \mathbb{R}^n \setminus \{0\} \to \mathbb{S}^{n-1}$ 
given by $f(x) = \frac{x}{|x|}$, and under the restriction of $f$ to $\mathcal{M}(\mathscr{W}_{n,3})$, 
we get a deformation retract $f': \mathcal{M}(\mathscr{W}_{n,3}) \to |\mathscr{C}(W)| \backslash |\Delta_0|$. 
By the above proposition, there is a CW complex $X$ that is homotopy equivalent to $|\mathscr{C}(W)| \backslash |\Delta_0|$. 
We intend to show that $2$-skeleton of $X$ is obtained from $\Gamma^{n-2}(\mathscr{C}(W))$ by attaching 2-cells to the $4$-cycles. 
To do so, we must study the maximal elements of $\Delta_0$, which correspond to the maximal dimensional cones that are
contained in the $\mathscr{W}_{n,3}$-arrangement. 
We claim that they correspond to cosets of standard irreducible parabolic subgroups of rank $2$.

First, we show that a cone $wC_I \cap \mathbb{S}^{n-1}$ is contained in $\Delta_0$ if and only if $C_I \cap \mathbb{S}^{n-1}$ 
is contained in $\Delta_0$, for all $I \subseteq S$.
 Clearly, if there is an $X \in \mathscr{W}_{n,3}$ such that $wC_I \subset X$, then $C_I \subset w^{-1}X \in \mathscr{W}_{n,3}$. 
The other direction follows similarly. 
So it suffices to study standard parabolic subgroups.

Now let $s,t \in S$ be distinct reflections. 
We will write $W_{s,t}$ for $W_I$ whenever $I = \{s,t\}$, and similarly we write $C_{s,t}$ for $C_I$ when $I = \{s,t\}$. 
We claim that for any $I \subseteq S$, $C_{s,t} \subseteq Fix(W_I)$ if and only if $s,t \in I$. 
Suppose $s,t \in I$, $x \in C_{s,t}$, and let $\alpha_s, \alpha_t$ be the roots of $\Pi$ corresponding to $s, t$. 
Since $x \in C_{s,t}$, we have $(x, \alpha_s) = (x, \alpha_t) = 0$, and it follows that $sx = tx = x$, whence $x \in Fix(W_I)$. 
For the reverse implication, without loss of generality, assume $s \not\in I$. Then $(x, \alpha_s) > 0$, 
and thus $sx \neq x$, so $x \not\in Fix(W_I)$. Hence $C_{s,t} \subseteq Fix(W_I)$ if and only if $s,t \in I$.

Let $s,t \in S$ be distinct reflections. 
If $m(s,t) > 2$, then $C_{s,t} \subset Fix(W_{s,t}) \in \mathscr{W}_{n,3}$, 
and if $m(s,t) = 2$, then for all $X \in \mathscr{W}_{n,3}$, $C_{s,t} \not\subset X$. 
Thus the maximal faces of $\Delta_0$ correspond to cosets $wW_{s,t}$, where $s,t \in S, m(s,t) > 2$, cosets of standard irreducible 
parabolic subgroups of rank 2.

Now we show that the 2-skeleton of $X$ is $\Gamma^{n-2}(\mathscr{C}(W))$ with a 2-cell attached to every 4-cycle.. 
Let $P$ be the face poset of $\mathscr{C}(W)$, and let $P_0$ be the ideal generated by $\Delta_0$. 
We can view $P_0^*$ as an upper ideal in $P^*$
 and use the fact that $(P \backslash P_0)^* = P^* \backslash P_0^*$ to get a combinatorial description of the face poset of $X$. 
The face poset of $P^*$ corresponds to the set of all cosets $wW_I$ of parabolic subgroups of $W$, 
ordered by inclusion, and $P_0^*$ is the upper order ideal whose minimal elements are cosets of irreducible parabolic subgroups of rank $2$. 
Thus, the $1$-skeleton of $X$ is clearly isomorphic to $\Gamma^{n-2}(\mathscr{C}(W))$, 
and the $2$-cells correspond to cosets of $W_{s,t}$, where $s,t \in S, m(s,t) = 2$, which are bounded by $4$-cycles. 
Hence the $2$-skeleton of $X$ is given by attaching $2$-cells to the $4$-cycles of $\Gamma^{n-2}(\mathscr{C}(W))$, 
which is the space $X_{\Gamma}$ constructed in Proposition \ref{comptograph}. 
So we obtain the following chain of isomorphisms:

$$\pi_1(\mathcal{M}(\mathscr{W}_{n,3})) \cong \pi_1(|\mathscr{C}(W)| \backslash |\Delta|) \cong \pi_1(X) \cong A_1^{n-2}(\mathscr{C}(W)).$$ 

where the final isomorphism comes from Propisition \ref{comptograph} \end{proof}

\section{An algebraic description of $\pi_1(\mathcal{M}(\mathscr{W}_{n,3}))$}

In this section, we give a description of $\pi_1(\mathcal{M}(\mathscr{W}_{n,k}))$ that is similar to the idea of a pure Artin group. 
In our case, the group we consider is a (possibly infinite) Coxeter group.  
Recall that $W$ affords the following presentation: $W$ is generated by $S$ subject to the relations:

\begin{enumerate}

\item $s^2 = 1$, $\forall s \in S$ 

\item $st = ts$, $\forall s,t \in S$ such that $m(s,t) = 2$ 

\item $sts = tst$, $\forall s,t \in S$, such that $m(s,t) = 3$ 

 \hspace{3cm} \vdots 

\item[i.] $\underbrace{stst\cdots}_{i} = \underbrace{tsts\cdots}_{i}$, $\forall s,t \in S$, such that $m(s,t) = i$ 

\hspace{3cm}  \vdots 

\end{enumerate}

where of course we have no relation of the form $st\cdots = ts\cdots$ if $m(s,t) = \infty$.

If $G$ is a group generated by $S$ subject to every relation except relations of type 1, 
then $G$ is an \emph{Artin group}. There is a surjective homomorphism $\varphi: G \to W$ 
given by $\varphi(s) = s$ for all $s \in S$. The kernel of $\varphi$ is the \emph{pure} Artin group. 
As stated in the introduction, the pure Artin group is isomorphic to the fundamental group of the complement 
of the complexification of the Coxeter arrangement for $W$. The goal of this section is to give a real analogue 
of this result for the $\mathscr{W}_{n,3}$-arrangements.

\begin{figure}[htbp]
\label{fig:dynkin}
  \center{\scalebox{.5}{\input{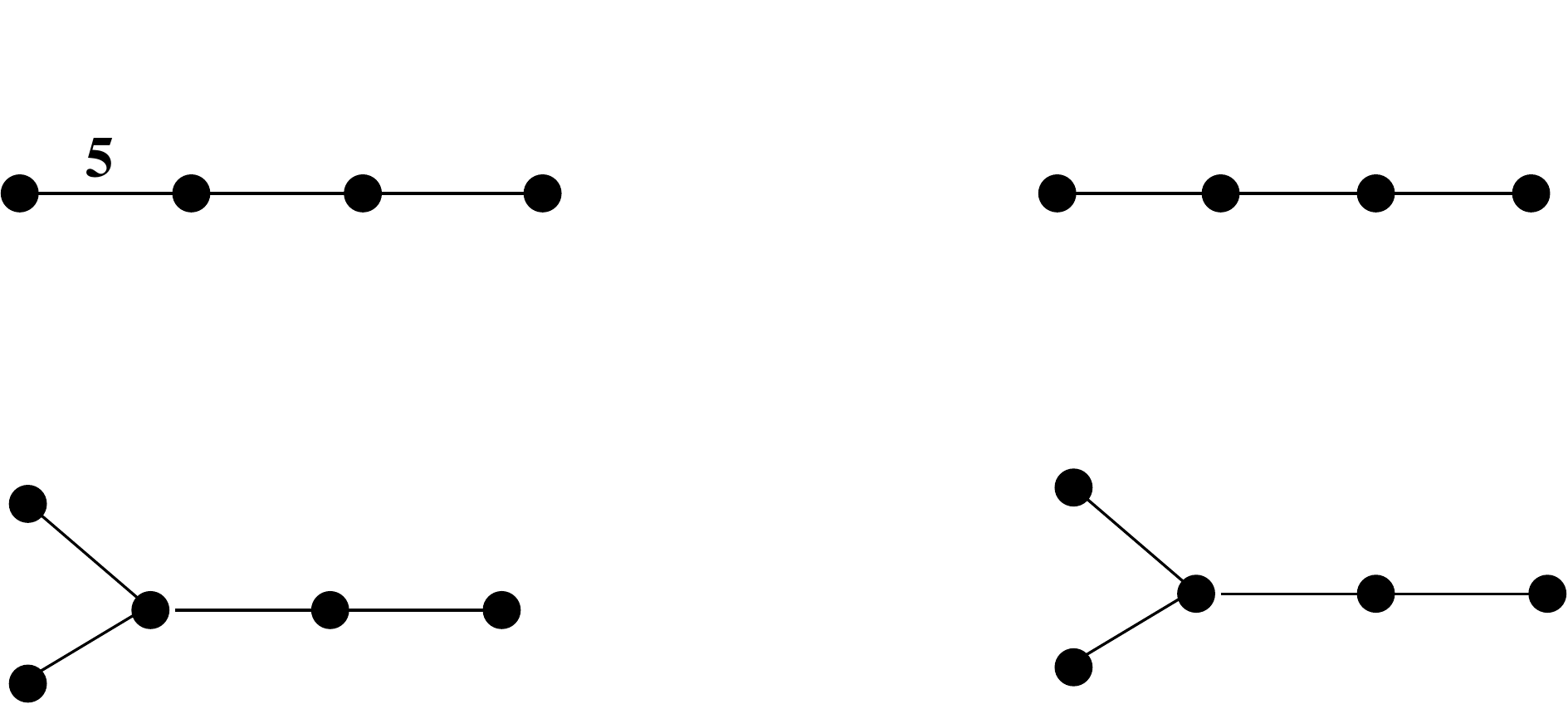_t}}}
  \caption{Dynkin diagrams for $W$ and $W'$}
\end{figure}

 In our case, let $W'$ be a group on $S$ subject to only the relations of type 1 and 2. 
Given the Dynkin diagram $D$ for $W$, 
$W'$ is obtained by replacing all the edge labels in $D$ with the edge label $\infty$, and letting $W'$ be 
the resulting Coxeter group.

Consider the surjective homomorphism $\varphi': W' \to W$ given by $\varphi'(s) = s$ for all $s \in S$. 
Then the following result holds:

\begin{theorem}
\label{purepar}

$\pi_1(\mathcal{M}(\mathscr{W}_{n,3})) \cong \ker \varphi'$.

\end{theorem}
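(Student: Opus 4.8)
The plan is to combine the topological identification from Theorem~\ref{mainresult} with a direct combinatorial analysis of $q$-loops in the Coxeter complex, where $q = n-2$. By Theorem~\ref{mainresult} (case $k=3$), we have $\pi_1(\mathcal{M}(\mathscr{W}_{n,3})) \cong A_1^{n-2}(\mathscr{C}(W))$, so it suffices to construct an isomorphism $A_1^{n-2}(\mathscr{C}(W)) \cong \ker \varphi'$. Recall that the maximal simplices of $\mathscr{C}(W)$ of dimension $\geq n-2$ are the chambers, indexed by elements of $W$, and two chambers are $(n-2)$-near exactly when the corresponding elements differ by right multiplication by a simple reflection. Thus an $(n-2)$-loop based at the fundamental chamber $C_\emptyset$ (corresponding to $1 \in W$) is a sequence $1 = w_0, w_1, \ldots, w_\ell = 1$ with $w_{i+1} = w_i s_{j_i}$ for simple reflections $s_{j_i} \in S$. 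Equivalently, it is a word $s_{j_0} s_{j_1} \cdots s_{j_{\ell-1}}$ in the free monoid on $S$ whose image in $W$ is the identity.

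The key step is to show that the equivalence relation $\simeq_{n-2}$ on such loops is generated precisely by (i) stretching/destretching (which corresponds to inserting or deleting a subword $ss$, since $w, ws, w$ is a valid stretched segment and $w, ws$ followed by $ws, w$ means inserting $s s$), and (ii) the homotopies across $2$-faces of the $W$-permutahedron, which by the analysis preceding Theorem~\ref{mainresult} are bounded by $4$-cycles, i.e., by cosets $wW_{\{s,t\}}$ with $m(s,t)=2$; crossing such a $2$-cell replaces a subword $st$ by $ts$ when $m(s,t)=2$. More precisely, I would invoke Proposition~\ref{comptograph}: $A_1^{n-2}(\mathscr{C}(W)) \cong \pi_1(X_\Gamma)$ where $X_\Gamma$ is the $1$-skeleton of the $W$-permutahedron with $2$-cells glued on all $3$- and $4$-cycles; there are no $3$-cycles, and the $4$-cycles are exactly the boundaries of the square $2$-faces, corresponding to commuting pairs. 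Now $\pi_1$ of a presentation $2$-complex of a group with generating graph $\Gamma$ and relators given by the attached $2$-cells is, by the standard van Kampen/presentation-complex argument, the fundamental group of the graph with those relations imposed. The graph $\Gamma$ is the Cayley graph of $W$ with respect to $S$, and $\pi_1$ of its presentation complex (relators: all square $2$-faces) based at $1$ is the set of words in $S^*$ representing $1 \in W$, modulo free reduction using $s^2 = 1$ (from stretching, i.e. the degenerate edges — actually one must be slightly careful here: $s^2$ relations come from the fact that the loop $w, ws, w$ traverses an edge back and forth, which is killed in $\pi_1$ of any graph) and modulo the commuting relations $st = ts$ for $m(s,t)=2$. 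This is exactly the subgroup of $W'$ consisting of words that die in $W$ under $\varphi'$, since $W'$ is presented by $S$ with only relations of types~1 and~2.

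To make this precise I would argue as follows. Let $F$ be the free group on $S$. The group $\pi_1(X_\Gamma, 1)$ can be computed as follows: edge-paths in $\Gamma$ from $1$ to $1$ are words $w \in S^*$ with $\varphi_W(w) = 1$ (where $\varphi_W \colon F \to W$ is the canonical map); two such are homotopic in $X_\Gamma$ iff they differ by a sequence of backtrack cancellations $ss^{-1}$ and insertions of boundaries of attached $2$-cells, i.e. the squares $stst$ with $m(s,t)=2$ (and, since $s = s^{-1}$ in the edge labeling of a Cayley graph of a group generated by involutions, also $s^2$). Hence $\pi_1(X_\Gamma,1) \cong \ker\big(F / \langle\!\langle s^2,\ (st)^2 : m(s,t)=2 \rangle\!\rangle \;\longrightarrow\; W\big) = \ker(\varphi' \colon W' \to W)$, because $W'$ is by definition $F / \langle\!\langle s^2,\ (st)^2 : m(s,t)=2 \rangle\!\rangle$ and $\varphi'$ is the map induced by $s \mapsto s$. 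Composing with the isomorphisms $\pi_1(\mathcal{M}(\mathscr{W}_{n,3})) \cong A_1^{n-2}(\mathscr{C}(W)) \cong \pi_1(X_\Gamma, 1)$ yields the theorem.

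The main obstacle is the bookkeeping in the middle step: carefully matching the combinatorial homotopy moves on $(n-2)$-loops (stretching and grid-homotopies) with the topological $2$-cells of $X_\Gamma$, and verifying that no additional relations are introduced — in particular, confirming that edges traversed in reverse contribute exactly the relation $s^2 = 1$ and nothing more, and that the square $2$-cells contribute exactly $st=ts$ for $m(s,t)=2$ with no hidden longer relations sneaking in from larger $2$-faces of the permutahedron (they do not, precisely because those larger $2m(s,t)$-gons are \emph{not} filled in $X_\Gamma$ — only the $3$- and $4$-cycles are, and there are no $3$-cycles). Once that correspondence is pinned down, identifying the resulting group with $\ker\varphi'$ is immediate from the definition of $W'$.
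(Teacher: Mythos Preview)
Your proposal is correct and follows the same overall scaffold as the paper: reduce via Theorem~\ref{mainresult} to $A_1^{n-2}(\mathscr{C}(W))$, then via Proposition~\ref{comptograph} to $\pi_1(X_\Gamma)$, using that $\Gamma^{n-2}(\mathscr{C}(W))$ is the Cayley graph of $W$ with no $3$-cycles and with $4$-cycles exactly the commutation squares.

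Where you diverge is in the final identification $\pi_1(X_\Gamma)\cong\ker\varphi'$. The paper does this constructively: it builds explicit inverse maps $F,G$ between discrete homotopy classes and $\ker\varphi'$, proves (Proposition~\ref{t123prop}) that every discrete homotopy decomposes into the elementary moves (T1)--(T3), reads off that these correspond exactly to the defining relations of $W'$ (Corollary~\ref{dilemma}), and then invokes Tits' solution of the word problem for Coxeter groups (Theorem~\ref{brentithm}) to prove the converse direction (Lemma~\ref{dilemma2}). You instead compute $\pi_1(X_\Gamma)$ directly from the edge-path description and a group-theoretic identification: loops at $1$ are words in $S^*$ trivial in $W$, the relations imposed are precisely $s^2$ and $(st)^2$ for $m(s,t)=2$, hence $\pi_1(X_\Gamma)=\ker(W'\to W)$. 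Your route is cleaner and avoids the case analysis of Proposition~\ref{t123prop} and the appeal to Theorem~\ref{brentithm}; the paper's route, on the other hand, yields the explicit isomorphism $[\sigma]\mapsto f(\sigma)$, which it reuses in Section~5.

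One point in your write-up would benefit from being made precise rather than asserted: the passage from ``closed edge-paths modulo backtracks and square relators'' to ``$\ker(W'\to W)$'' is the crux, and is cleanest stated as a covering-space argument (the Cayley $2$-complex of $W'$ is simply connected and covers $X_\Gamma$ with deck group $\ker\varphi'$), or equivalently as $\pi_1(\Gamma)=\ker\big(\ast_{s\in S}\mathbb{Z}/2\to W\big)$ followed by the third isomorphism theorem after killing the normal closure of the $(st)^2$. Your sentence about ``presentation $2$-complex'' slightly conflates the one-vertex presentation complex with the many-vertex Cayley complex; the argument goes through, but naming the covering explicitly would remove any ambiguity.
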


It is easy to see that in type $A$ and type $B$ we recover the results of Khovanov, that is $\ker \varphi'$ is the pure triplet group. Our work extends that of Khovanov to any finite real reflection group.

The proof of Theorem \ref{purepar} will appear after Lemma \ref{dilemma2}. We must first investigate the structure of $(n-2)$-loops in $\mathscr{C}(W)$ in more detail. As a result of Theorem \ref{mainresult}, we know that we can study $\pi_1(\mathcal{M}(\mathscr{W}_{n,k}))$ 
using discrete homotopy theory of $\mathscr{C}(W)$. For the duration of the section we will use the term loop to mean $(n-2)$-loop.
To any such loop $(\sigma) = (\sigma_0, \ldots, \sigma_{\ell}, \sigma_0)$ in $\mathscr{C}(W)$ we associate a sequence of elements of 
$S \cup \{1 \}$ of length $\ell+1$ in the following way: For any $i \in [\ell]$, if $\sigma_i = \sigma_{i-1}$, let $s_i = 1$. 
Otherwise let $s_i$ be the unique element $s$ of $S$ for which $\sigma_{i-1}s = \sigma_i$. Recall that $S^*$ denotes the set of all words using letters from $S$. We associate a word $f(\sigma)$ in $S^*$ to $(\sigma)$: the concatenation of the elements of the corresponding sequence in order.

We note that if $(\sigma)$ is a loop, then $f(\sigma)=1$ in $W$.
This implies that when viewing $f(\sigma)$ as a product in $W'$, $f(\sigma) \in \ker \varphi'$.
We also note that to any element $w = s_1 \cdots s_k$ in $S^*$ we can associate a chain 
$g(w) = (\sigma_0, \sigma_0s_1, \ldots, \sigma_0s_1\cdots s_m)$, where the elements $s_1 \cdots s_i$ are being viewed as elements of $W$.
If $w = 1$ when viewed as an element of $W$, then $g(w)$ is actually a loop. It is easy to see that for two loops 
$(\sigma), (\tau)$, $f((\sigma)*(\tau))= f(\sigma)f(\tau)$, and if $u,v \in S^*$, $u = v = 1$ in $W$, then $g(uv) = g(u)*g(v)$.


We now list three changes that preserve the homotopy class of a loop. We also give the effect of the changes on the corresponding word and finally we show that all homotopy grids may be constructed using only these changes. Let $e$ be the identity element.

\begin{enumerate}
\item[(T1)] Repeating simplices. By definition, the discrete homotopy equivalence relation allows us to repeat a simplex multiple times. 

If $(\sigma)$ and $(\tau)$ differ by an operation of type (T1), then $f((\sigma)) = f((\tau))$ as words, 
and thus are equal in $W'$. 



\item[(T2)] Inserting or removing a simplex. On one row there are three adjacent 
identical simplices $\alpha$, and on the bottom row the middle simplex of this triple is replaced with a new simplex 
$\beta$ that is $(n-2)$-near $\alpha$.

\begin{center}\includegraphics[height=2cm]{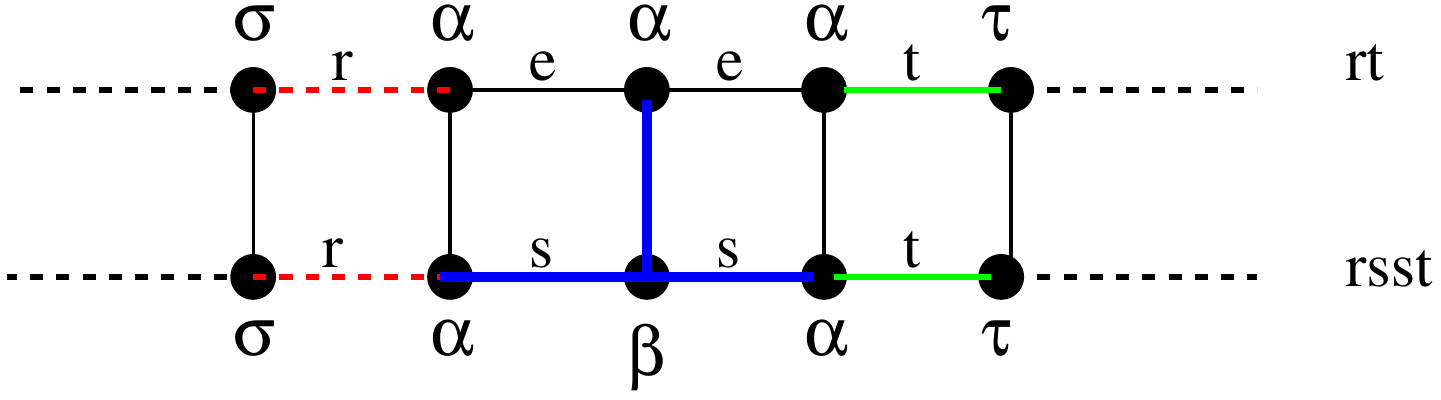}\end{center}

If $(\sigma)$ and $(\tau)$ differ by an operation of type (T2), then $f((\sigma)), f((\tau))$ differ by the insertion or removal of $s^2$ for some $s \in S$. Thus, these words are equivalent in $W'$.

\item[(T3)] Exchanging pairs that are $(n-2)$-near. Let $(\alpha, \beta, \tau, \gamma)$ be a loop of distinct simplices. We construct a discrete homotopy as shown in the figure. We note that the resulting words differ by an application of a commutative relation. It is also worth noting that this operation can only be performed when $s, t$ commute. 

\begin{center}\includegraphics[height=2cm]{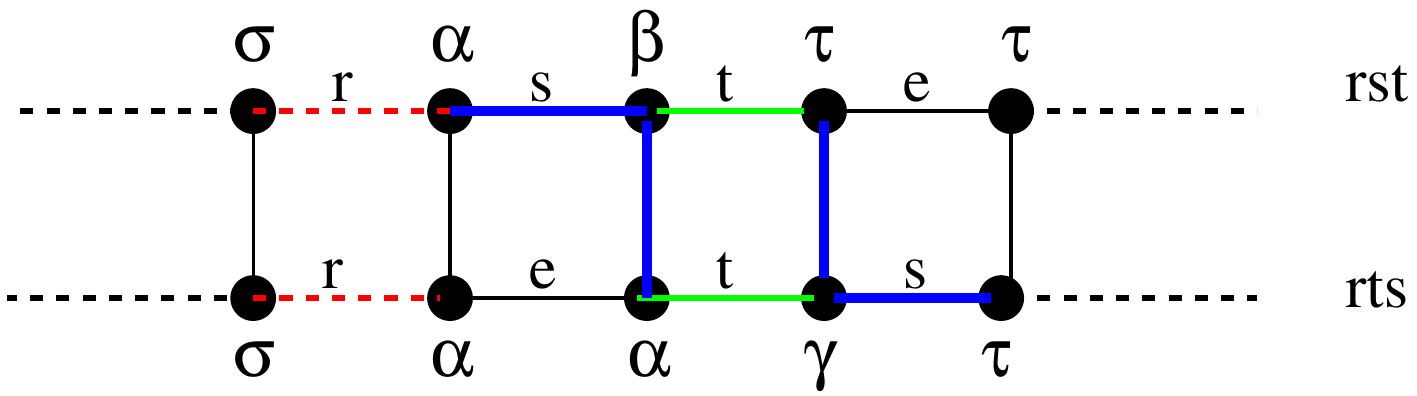}\end{center}

If $(\sigma)$ and $(\tau)$ differ by an operation of type (T3), then $f((\sigma)), f((\tau))$ 
differ by a commutative relation, and hence are equivalent in $W'$.

\end{enumerate}

\begin{proposition}
 \label{t123prop}
If two loops $(\sigma)$ and $(\tau)$ are homotopic then we may deform $(\sigma)$ into $(\tau)$ using only changes (T1)-(T3). 
\end{proposition}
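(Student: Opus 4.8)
The plan is to show that the discrete homotopy relation on $(n-2)$-loops is generated by the three changes (T1)--(T3), by peeling a general homotopy grid into two-row grids and then resolving each two-row grid, column by column, into a sequence of these changes. Throughout, a loop is a walk in the chamber graph $\Gamma^{n-2}(\mathscr{C}(W))$ (with possibly repeated consecutive vertices), that is, a sequence of elements of $W$ with consecutive entries equal or differing by right multiplication by a simple reflection, exactly as in the construction of the word $f(\sigma)$.

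First I would unwind the definition: $(\sigma)$ and $(\tau)$ are homotopic if and only if they are joined by a finite chain of moves, each of which is a stretching or a single homotopy grid between same-length loops; a stretching is precisely change (T1); and a homotopy grid with rows $r_0,\ldots,r_m$ breaks into the $m$ two-row grids formed by consecutive rows. So the proposition reduces to the claim: if $(u)=(u_0,\ldots,u_\ell)$ and $(v)=(v_0,\ldots,v_\ell)$ are the top and bottom rows of one two-row homotopy grid --- both loops based at $\sigma_0$, with $u_0=v_0=u_\ell=v_\ell=\sigma_0$ and $u_j$ equal to or $(n-2)$-near $v_j$ for every $j$ --- then $(u)$ can be deformed into $(v)$ using only (T1)--(T3).

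To prove the claim I would sweep across the grid one column at a time, using the hybrid loops $h^{(j)}=(v_0,\ldots,v_j,u_j,u_{j+1},\ldots,u_\ell)$ for $0\le j\le\ell$. Since $v_0=u_0=\sigma_0$ and $u_\ell=v_\ell=\sigma_0$, the loop $h^{(0)}$ is $(u)$ with its first vertex repeated and $h^{(\ell)}$ is $(v)$ with its last vertex repeated, so $h^{(0)}\simeq(u)$ and $h^{(\ell)}\simeq(v)$ by (T1); it remains to pass from $h^{(j)}$ to $h^{(j+1)}$. These two loops coincide except that the window $(v_j,u_j,u_{j+1})$ in $h^{(j)}$ must become $(v_j,v_{j+1},u_{j+1})$ in $h^{(j+1)}$, so everything is governed by the $2\times2$ block with first row $v_j,v_{j+1}$ and second row $u_j,u_{j+1}$, whose four sides $v_j$--$v_{j+1}$, $u_j$--$u_{j+1}$, $v_j$--$u_j$, $v_{j+1}$--$u_{j+1}$ are each an edge of $\Gamma^{n-2}(\mathscr{C}(W))$ or an equality (because both rows are loops and the column is a chain). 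If all four corners are distinct and all four sides are genuine edges, the block is a $4$-cycle of $\Gamma^{n-2}(\mathscr{C}(W))$, hence by the classification of $4$-cycles recorded before Theorem~\ref{mainresult} it bounds a $2$-face indexed by some $wW_{\{s,t\}}$ with $m(s,t)=2$, and sliding the window across this $4$-cycle is exactly change (T3). In each remaining case some corners coincide; any such coincidence that would force three distinct vertices of $\Gamma^{n-2}(\mathscr{C}(W))$ to span a triangle is impossible, since that graph has no $3$-cycles, and the surviving degenerate blocks are handled directly: a window $(\alpha,\beta,\alpha)$ with $\alpha$--$\beta$ an edge is inserted or deleted by (T2) after using (T1) to create the triple $(\alpha,\alpha,\alpha)$, and a difference that only relocates a repeated vertex along the loop is produced by (T1). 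Composing these transitions for $j=0,\ldots,\ell-1$ carries $h^{(0)}$ to $h^{(\ell)}$, hence $(u)$ to $(v)$, using only (T1)--(T3); together with the first reduction this proves the proposition.

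I expect the step requiring the most care to be the case analysis just outlined --- enumerating the finitely many possible $2\times2$ blocks and confirming that each reduces to (T1), (T2), or (T3). The two structural facts that keep this list short, namely that $\Gamma^{n-2}(\mathscr{C}(W))$ has no $3$-cycles (which rules out the triangular degenerate blocks) and that its $4$-cycles are precisely the $2$-faces of the $W$-permutahedron with $m(s,t)=2$ (which makes (T3) applicable to the generic block), are already established, so I anticipate no obstacle beyond the bookkeeping.
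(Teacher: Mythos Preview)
Your argument is correct, but it follows a genuinely different route from the paper's. The paper does \emph{not} work directly with homotopy grids: it invokes Proposition~\ref{comptograph} to identify $A_1^{n-2}(\mathscr{C}(W))$ with $\pi_1(X_\Gamma)$, so that two loops are homotopic iff they differ by pushes across $2$-cells of $X_\Gamma$, which (since $\Gamma^{n-2}(\mathscr{C}(W))$ is triangle-free) are exactly the $4$-cycles. It then reduces to a single $4$-cycle and does a four-case analysis of how a loop can be pushed across one such cell, resolving each case with (T1)--(T3). Your approach instead stays entirely within the combinatorial definition of discrete homotopy: you peel a grid into two-row strips, sweep across with the hybrid loops $h^{(j)}$, and classify the possible $2\times2$ blocks. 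Both arguments ultimately rest on the same two structural facts about $\Gamma^{n-2}(\mathscr{C}(W))$ (no $3$-cycles; every $4$-cycle bounds a commuting $2$-face), but yours is more elementary in that it never appeals to Proposition~\ref{comptograph} or to $\pi_1$ of the auxiliary CW complex $X_\Gamma$. The paper's version is shorter because that proposition absorbs the grid bookkeeping; yours is self-contained and would transfer to settings where an analogue of Proposition~\ref{comptograph} is not already in hand. One small point worth making explicit when you write it out: in the diagonal degeneration $v_j=u_{j+1}$ with the other three corners distinct, the transition requires \emph{two} applications of (T2) (remove the spike through $u_j$, then insert the spike through $v_{j+1}$), not one; your prose suggests this but does not quite say it.
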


\begin{proof}[Proof of Proposition \ref{t123prop}.]

By Proposition \ref{comptograph} and because $\Gamma^{n-2}(\mathcal{C}(W))$ is triangle free, we know that two loops are homotopic if and only if they differ by 4-cycles only. It is also easy to see that if $(\sigma)$ and $(\tau)$ differ by multiple 4-cycles that there is a series of intermediate loops such that each loop differs from the next by only a single 4-cycle. If we consider the loops in $\Gamma^{n-2}(\mathcal{C}(W))$, we may start with $(\sigma)$ and stretch it around the first 4-cycle where $(\sigma)$ and $(\tau)$ differ. This produces a new loop, $(\sigma_1)$ that is homotopic to $(\sigma)$. We repeat this process now with $(\sigma_1)$, stretching it around the first 4-cycle where it differs from $(\tau)$. In this way we construct a series of loops, all homotopic to one another and differing from one another in sequence by a single 4-cycle only. Thus it will suffice to consider cases where $(\sigma)$ and $(\tau)$ differ by only a single 4-cycle.

\begin{figure}[htbp]
\label{fig:dgen4cycle}
  \center{\scalebox{.68}{\input{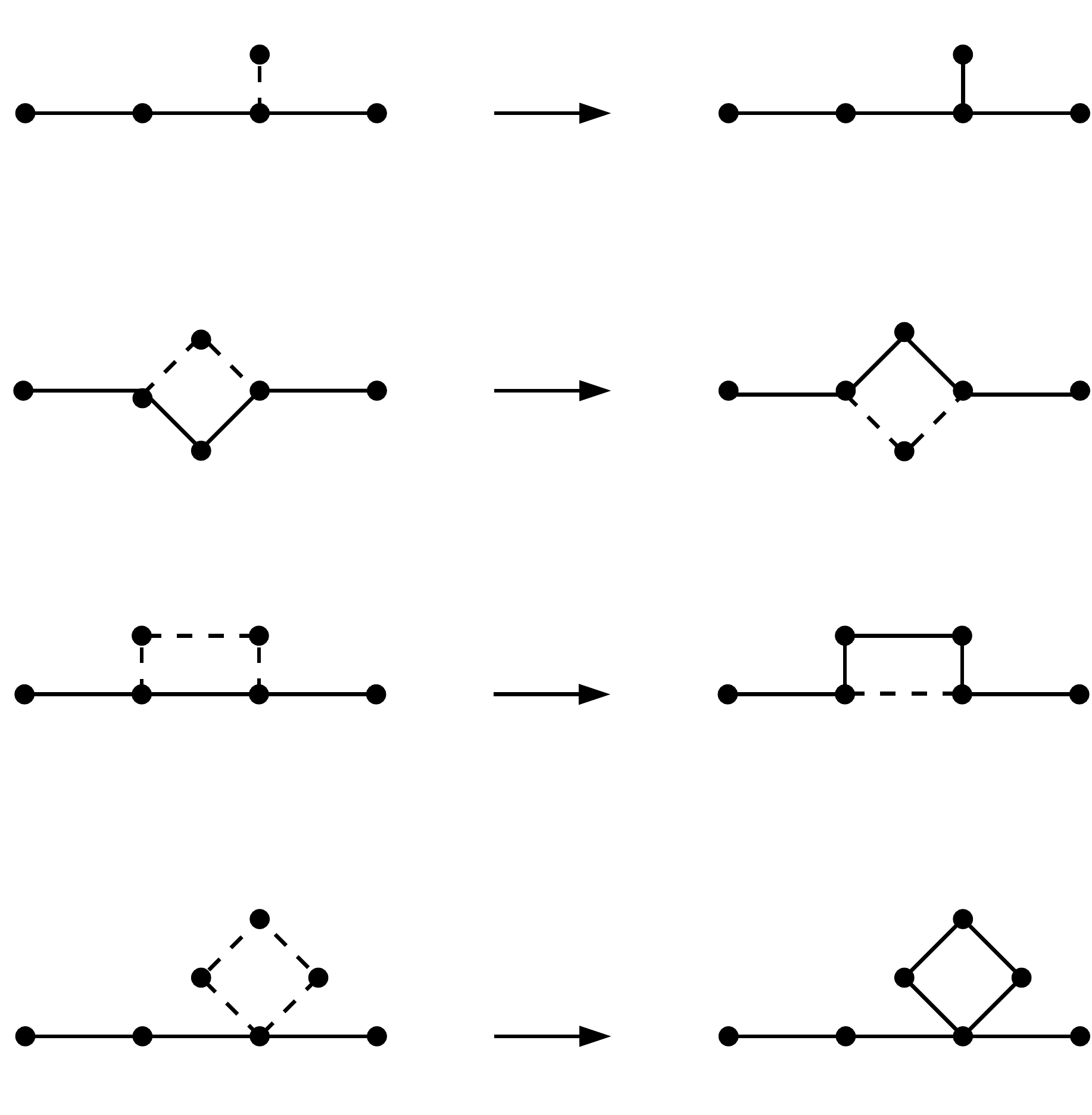_t}}}
  \caption{The four ways in which $(\sigma)$ and $(\tau)$ may differ by a 4-cycle.}
\end{figure}

\begin{enumerate}
\item[Case 1.] $(\sigma)$ and $(\tau)$ differ by a degenerate 4-cycle. That is, the loop $(\tau)$ traverses an edge to a new simplex and then immediately follows the same edge back. See the top case in Figure \ref{fig:dgen4cycle}. This corresponds to the change (T2). \\

\item[Case 2.] $(\sigma)$ and $(\tau)$ differ by a non-degenerate 4-cycle as in the second case of Figure \ref{fig:dgen4cycle}. This is clearly the same as the change (T3) since it involves replacing two edges in a 4-cycle with the opposite two edges. 

\item[Case 3.] The loops differ as in the third case of Figure \ref{fig:dgen4cycle}. Using change (T2) to insert the simplex $\tau$ and then change (T3) to switch to the new path using $\gamma$ produces the desired homotopy. 

\item[Case 4.] This case may also be accounted for using changes (T1)-(T3). We insert both $\tau$ and $\gamma$ using change (T1). Then we change the path $\gamma$, $\tau$, $\sigma_2$ to the path $\gamma$, $\delta$, $\sigma_2$ using (T3). 

\end{enumerate}
\end{proof}

Thus for any discrete homotopy operation, the corresponding words are either equal, or differ by one of the generating relations of $W'$.

\begin{corollary}
\label{dilemma}

Let $(\sigma), (\tau)$ be loops. If $(\sigma) \simeq (\tau)$ then $f(\sigma)=f(\tau)$ in $W'$.

\end{corollary}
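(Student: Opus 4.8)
The plan is to combine Proposition \ref{t123prop} with the word-level computations already recorded for the three elementary moves. First I would invoke Proposition \ref{t123prop} to replace the abstract relation $(\sigma) \simeq (\tau)$ by a concrete finite sequence of loops $(\sigma) = (\rho_0), (\rho_1), \ldots, (\rho_m) = (\tau)$ in which each consecutive pair $(\rho_{j-1}), (\rho_j)$ differs by a single change of type (T1), (T2), or (T3). By multiplicativity of $f$ in $W'$ this reduces the corollary to the one-step statement: if $(\rho)$ and $(\rho')$ differ by a single move of type (T1)--(T3), then $f(\rho) = f(\rho')$ in $W'$.

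Next I would dispose of the one-step statement using the three cases already analyzed when (T1), (T2), (T3) were introduced. For (T1) the associated words are literally equal in $S^*$, hence equal in $W'$. For (T2) the two words differ by the insertion or deletion of a factor $s^2$ for some $s \in S$, which is trivial in $W'$ because $W'$ retains the relations $s^2 = 1$. For (T3) the two words differ by swapping an adjacent factor $st$ to $ts$ with $s,t \in S$ commuting, i.e.\ by a type-2 relation, and $W'$ is built precisely so as to impose $st = ts$ for every commuting pair. Thus in all three cases the image in $W'$ is unchanged, and chaining through $(\rho_0), \ldots, (\rho_m)$ yields $f(\sigma) = f(\tau)$ in $W'$.

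I do not expect a genuine obstacle here: the statement is a formal consequence of Proposition \ref{t123prop} together with the per-move effects on $f$, both of which are already established. The only points needing a little care are bookkeeping ones. One should note that to compare two loops via a homotopy grid they must first be brought to a common length, which is exactly what repeated use of the stretching move (T1) accomplishes, so that the reduction above is legitimate. One should also check that each of (T1)--(T3), as drawn, really effects exactly the claimed local change to the word $f$, including in the boundary situation where the move occurs at the position adjacent to the basepoint $\sigma_0$ (so that one of the relevant edges degenerates to the trivial loop); in that case the associated letter is $1 \in S \cup \{1\}$, and the same three computations apply verbatim.
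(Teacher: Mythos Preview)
Your proposal is correct and is essentially the paper's own argument made explicit: the paper states the corollary as an immediate consequence of Proposition~\ref{t123prop} together with the per-move analyses of (T1)--(T3), without giving a separate proof. One small quibble: the phrase ``by multiplicativity of $f$ in $W'$'' is not what you mean---the reduction to the one-step case is simply transitivity of equality in $W'$ along the chain $f(\rho_0)=f(\rho_1)=\cdots=f(\rho_m)$.
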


We want to prove a converse of Corollary \ref{dilemma}. To do that, 
we recall the concept of \emph{nil moves} and \emph{braid moves}. 
A nil move in a word $w$ is the insertion or removal of $s^2$ for some $s \in S$. A braid move in 
$w$ is the replacement of an occurrence of $\underbrace{stst\cdots}_{m(s,t)}$ with an occurrence of 
$\underbrace{tsts\cdots}_{m(s,t)}$. The following is known:

\begin{theorem}[Theorem 3.3.1 in \cite{bjorner-brenti}] 
\label{brentithm}
Let $W$ be a Coxeter group.
An expression for $w$ can be transformed into a reduced expression for $w$ by a sequence of nil moves and braid moves.
\end{theorem}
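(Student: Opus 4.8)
The plan is to argue by induction on the length $k$ of the given expression $w = s_{i_1}\cdots s_{i_k}$, writing $\bar w \in W$ for the element it represents. If $k = \ell(\bar w)$ there is nothing to prove, since the expression is already reduced. So assume $k > \ell(\bar w)$; the goal is to produce, using only braid and nil moves, an expression of length $k-2$ for $\bar w$, after which the inductive hypothesis finishes the argument.

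To locate where to cut, track the lengths $\ell(s_{i_1}\cdots s_{i_j})$ for $j = 0,1,\ldots,k$. Each step changes the length by exactly $\pm 1$, and since the word is not reduced there is a first index $j$ with $\ell(s_{i_1}\cdots s_{i_j}) = \ell(s_{i_1}\cdots s_{i_{j-1}}) - 1$. Set $u = s_{i_1}\cdots s_{i_{j-1}}$; by minimality of $j$ this prefix is a reduced expression, so $\ell(u) = j-1$, and right multiplication by $s_{i_j}$ decreases its length. The Exchange Condition for Coxeter groups then yields an index $m \le j-1$ with $u\,s_{i_j} = s_{i_1}\cdots \widehat{s_{i_m}}\cdots s_{i_{j-1}}$, equivalently $s_{i_1}\cdots\widehat{s_{i_m}}\cdots s_{i_{j-1}}\, s_{i_j}$ is a second reduced expression for $u$, again of length $j-1$.

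Now I invoke Matsumoto's theorem (the Word Property, due to Tits): any two reduced expressions for the same element of $W$ are related by a sequence of braid moves alone, with no nil moves needed. Applying the corresponding braid moves to the prefix $s_{i_1}\cdots s_{i_{j-1}}$ of the original word — a purely local rewrite inside the longer word — transforms it into $s_{i_1}\cdots\widehat{s_{i_m}}\cdots s_{i_{j-1}}\, s_{i_j}\, s_{i_j}\, s_{i_{j+1}}\cdots s_{i_k}$, in which the letter $s_{i_j}$ now occurs twice consecutively. A single nil move deletes $s_{i_j}^2$, giving an expression of length $k-2$ for $\bar w$, and induction completes the proof.

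The genuine obstacle is Matsumoto's theorem itself, which is the only nontrivial input; everything else (the length bookkeeping, one application of the Exchange Condition, and the nil move) is routine. If one is not permitted to cite it, it must be established first, and the standard route is again an induction on length via the dihedral parabolic subgroups $\langle s,t\rangle$: one shows that whenever $\ell(sw) < \ell(w)$ and $\ell(tw) < \ell(w)$ for distinct $s,t \in S$, the element $w$ admits a reduced expression beginning with the alternating word $\underbrace{sts\cdots}_{m(s,t)}$, and then bootstraps this local fact to connect arbitrary reduced expressions of a common element by braid moves.
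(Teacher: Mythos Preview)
The paper does not prove this theorem; it merely cites it as Theorem~3.3.1 of Bj\"orner--Brenti and uses it as a black box in the proof of Lemma~\ref{dilemma2}. So there is no proof in the paper to compare against.

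Your argument is correct and is essentially the standard proof one finds in the literature. The one point worth flagging is the circularity risk you already identify: in Bj\"orner--Brenti, Theorem~3.3.1 packages together both the statement quoted here \emph{and} Matsumoto's theorem (the Word Property) as a single result, with a common inductive proof. Invoking Matsumoto's theorem to prove the quoted statement is therefore not quite self-contained relative to that reference, though it is logically sound since Matsumoto's theorem can be, and historically was, established independently (Tits, Matsumoto). Your closing paragraph correctly outlines how one would prove Matsumoto's theorem if pressed, so the proposal is complete modulo that caveat.
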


In our case, it is worth noting that the only braid moves are commutative relations.

\begin{lemma}
\label{dilemma2}
Let $w, v \in S^*$, then the following are true. 
\begin{enumerate}
\item If $w = 1$ in $W'$, then $g(w)$ is contractible.

\item If $w = v$ in $W'$, then $g(w) \simeq g(v)$.

\end{enumerate}

\end{lemma}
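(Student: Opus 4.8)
The plan is to reduce both statements to the three homotopy operations (T1)--(T3) from Proposition~\ref{t123prop}, using Theorem~\ref{brentithm} to convert the algebraic hypothesis ``$w=1$ in $W'$'' into a combinatorial sequence of nil moves and braid moves on the word. Note that part (1) is the special case $v=1$ of part (2) once we observe that $g(1)$ is the constant loop $(\sigma_0)$, which is contractible by definition; so it suffices to prove (2). Given $w,v\in S^*$ with $w=v$ in $W'$, I would first dispose of a technical point: $g(w)$ and $g(v)$ are only loops when $w=v=1$ in $W$ as well, but since $W'$ surjects onto $W$, equality in $W'$ forces equality in $W$, so both $g(w)$ and $g(v)$ are genuine $(n-2)$-loops based at $\sigma_0$.

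The heart of the argument is the following translation. Since $w=v$ in $W'$, the word $wv^{-1}$ (where $v^{-1}$ is $v$ read backwards) equals $1$ in $W'$. By Theorem~\ref{brentithm} applied to the Coxeter group $W'$, the expression $wv^{-1}$ can be transformed into a reduced expression for $1$ --- namely the empty word --- by a sequence of nil moves (insertion/removal of $s^2$) and braid moves, which in $W'$ are exactly the commutative relations $st\mapsto ts$ when $m_{W'}(s,t)=2$. I would then check, move by move, that each such move on the word is realized by a homotopy of the associated loop built from (T1)--(T3): a nil move corresponds to a type-(T2) insertion or removal of a back-and-forth edge (after possibly a (T1) stretch to create the needed repeated simplices), and a commutative-relation braid move corresponds to a type-(T3) swap of two $(n-2)$-near edges, which is legitimate precisely because $s$ and $t$ commute. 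Reassembling, this shows $g(wv^{-1})\simeq g(\text{empty word})=(\sigma_0)$, i.e.\ $g(wv^{-1})$ is contractible. Finally, using the compatibility $g(uv')=g(u)*g(v')$ for words that are trivial in $W$ --- applied with $u=w$, $v'=v^{-1}$ --- together with the fact that $g(v^{-1})$ is the inverse loop of $g(v)$ in $A_1^{n-2}$, contractibility of $g(w)*g(v)^{-1}$ yields $g(w)\simeq g(v)$.

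The step I expect to be the main obstacle is verifying that each nil move on an \emph{arbitrary} occurrence inside the word --- not just at the end --- lifts to a homotopy via (T1)--(T3), since the homotopy grids in (T2) are local and one must ensure the surrounding simplices of the loop are unaffected. This is really a bookkeeping matter: a nil move inserts $s^2$ between positions $i$ and $i+1$, so at the corresponding simplex $\sigma_i = \sigma_0 s_1\cdots s_i$ one stretches (T1) to triple $\sigma_i$ and then applies (T2) at that triple to detour through $\sigma_i s$; the rest of the loop is untouched because the grid for (T2) has all other columns constant. A secondary subtlety is that Theorem~\ref{brentithm} is stated for reduced expressions, so I should phrase the application as: an expression for the identity can be reduced to the empty word, which is the unique reduced word for $1\in W'$; this is exactly the cited theorem with $w=1$. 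Once these points are made carefully, the rest is the routine concatenation/inverse compatibility already recorded in the paragraph preceding the lemma.
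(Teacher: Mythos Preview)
Your proposal is correct and takes essentially the same approach as the paper: reduce equality in $W'$ to a sequence of nil moves and commutative braid moves via Theorem~\ref{brentithm}, then realize each move by the homotopy operations (T1)--(T3). The paper merely organizes it in the opposite order---proving (1) first by induction on the number of moves and then deducing (2) by appending $g(v)$ to every row of the contractibility grid for $g(wv^{-1})$---and your one slip (equality in $W$ only gives $g(w)$ and $g(v)$ the same endpoint, not that they are loops based at $\sigma_0$) is harmless here and mirrors the paper's own informality on this point.
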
 

\begin{proof}

Proof of 1. Since $w = 1$, the word $w$ can be transformed into the word $1$ by a sequence of nil moves 
and braid moves of length $\ell$. Proof by induction on $\ell$, where the case $\ell = 0$ is clear.

Consider the first move in the sequence. Let $w'$ be the word obtained by performing the first move in the sequence. 
If the first move is a nil move, a homotopy operation (T2) gives a discrete 
homotopy between $g(w)$ and $g(w')$ (or more accurately, a discrete homotopy between stretchings of 
$g(w), g(w')$). If the first move is a braid move, then a homotopy operation (T3) gives a discrete homotopy 
between $g(w)$ and $g(w')$. Now by induction, $g(w') \simeq (\sigma_0)$, so $g(w) \simeq (\sigma_0)$.

Proof of 2. We know that $wv^{-1} = 1$ in $W'$, so by the first part, 
$g(wv^{-1})$ is contractible, so there is a homotopy grid between $g(wv^{-1})$ and the trivial loop.
If we append $g(v)$ to the end of every row of the homotopy grid, 
we see that $g(wv^{-1})*g(v) \simeq g(v)$. Yet clearly 
\[g(w) \simeq g(wv^{-1}v) = g(wv^{-1})*g(v),\] so the result follows. \end{proof}

We may now prove the main result of this section. 

\begin{proof}[\textbf{Proof of Theorem \ref{purepar}}]

By Theorem \ref{mainresult}, it suffices to show that $A_1^{n-2}(\mathscr{C}(W)) \cong \ker \varphi'$. 
We define $F: A_1^{n-2}(\mathscr{C}(W)) \to \ker \varphi'$ as follows: 
let $[\sigma] \in A_1^{n-2}(\mathscr{C}(W))$ with representative $(\sigma)$, 
then $F([\sigma]) = f((\sigma))$. By Lemma \ref{dilemma}, this map is well-defined. 
Since $f((\sigma)*(\tau)) = f(\sigma)f(\tau)$ for any loops $(\sigma), (\tau)$, the map $F$ is clearly a homomorphism.

We define a map $G: \ker \varphi' \to A_1^{n-2}(\mathscr{C}(W))$ as follows: 
given $w \in \ker \varphi'$ with expression $w = s_1 s_2 \cdots s_k$, 
let $G(w) = [g(s_1\cdots s_k)]$. By part 2 of Lemma \ref{dilemma2}, the map $G$ is also well-defined.

Given a word $w \in S^*$, we observe that $f(g(w)) = w$. Also, 
given a loop $(\sigma)$, $g(f(\sigma)) \simeq (\sigma)$, since $(\sigma)$ 
is a stretching of $g(f(\sigma))$. Thus $F$ and $G$ are isomorphisms, and inverses of each other. \end{proof}

\section{Homotopy groups of $\mathcal{M}(\mathscr{W}_{n,k})$ for $k > 3$}

We now prove Theorem \ref{mainresult} for the case $k > 3$. 
The following result of Bj\"{o}rner and Welker will be useful.

\begin{proposition}[Corollary 3.2(ii) in \cite{bjorner-welker}]

Let $\mathscr{A}$ be a real subspace arrangement, and let $c$ be the minimum codimension of the 
subspaces of $\mathscr{A}$. Then $\pi_i(\mathcal{M}(\mathscr{A}))$ is trivial for $i < c-2$.

\end{proposition}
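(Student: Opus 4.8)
The plan is to prove the proposition by a general-position argument, establishing directly that for every $i < c-2$ each continuous map $f \colon S^i \to \mathcal{M}(\mathscr{A})$ is null-homotopic inside $\mathcal{M}(\mathscr{A})$. Write $\mathscr{A} = \{X_1, \dots, X_r\}$, where each $X_j$ is a linear subspace of $V$ of codimension at least $c$, and note that $\mathcal{M}(\mathscr{A})$ is a nonempty open subset of $V$. Since $V$ is contractible, $f$ extends to a continuous map $\bar f \colon D^{i+1} \to V$ with $\bar f|_{\partial D^{i+1}} = f$; the strategy is to deform $\bar f$, keeping it fixed on $\partial D^{i+1}$, until its image misses $\bigcup_j X_j$, at which point $\bar f$ is itself a null-homotopy of $f$ in $\mathcal{M}(\mathscr{A})$ and hence $\pi_i(\mathcal{M}(\mathscr{A})) = 0$. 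Because every homotopy below is taken rel $\partial D^{i+1}$, and $\partial D^{i+1}$ maps into $\mathcal{M}(\mathscr{A})$, the usual basepoint bookkeeping for the cases $i = 0$ and $i = 1$ requires no extra care.

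To carry out the deformation I would first homotope $f$ within $\mathcal{M}(\mathscr{A})$ to a smooth map: since $\mathcal{M}(\mathscr{A})$ is open in the smooth manifold $V$, a sufficiently $C^0$-close smooth approximation of $f$ still lands in $\mathcal{M}(\mathscr{A})$ and is homotopic to $f$ there, so we may assume $f$ is smooth. Extending as before to $\bar f \colon D^{i+1} \to V$ and applying the relative Whitney approximation theorem (using that $\bar f$ is now smooth on $\partial D^{i+1}$), we may take $\bar f$ smooth. Finally, by the relative transversality theorem, and since $\bar f$ misses each linear subspace $X_j$ along $\partial D^{i+1}$ — hence is transverse to it there — $\bar f$ can be perturbed rel $\partial D^{i+1}$ to a smooth map that is transverse to the finitely many subspaces $X_1, \dots, X_r$ simultaneously.

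The proof then closes with a dimension count. Transversality forces each $\bar f^{-1}(X_j)$ to be a smooth submanifold of the interior of $D^{i+1}$ (it lies in the interior since $\bar f$ misses $X_j$ near the boundary) of dimension $(i+1) - \mathrm{codim}\,X_j \le (i+1) - c$; as $i < c-2$ this number is strictly negative, so $\bar f^{-1}(X_j) = \emptyset$ for every $j$ (the same estimate in fact covers $i = c-2$ as well). Hence the perturbed $\bar f$ maps $D^{i+1}$ into $\mathcal{M}(\mathscr{A})$ while still restricting to $f$ on the boundary, so $f$ is null-homotopic, as desired. I expect the one point needing genuine care to be the rel-boundary bookkeeping — making sure both the smoothing and the transversality perturbation can be performed fixed on a neighborhood of $\partial D^{i+1}$, so that the final map really is a null-homotopy of the original $f$ and not merely of a perturbation of it. This is standard differential topology; alternatively one can sidestep smoothing altogether by triangulating $D^{i+1}$ and putting the simplicial map into piecewise-linear general position with respect to the $X_j$.
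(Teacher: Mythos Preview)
Your argument is correct. The paper does not actually prove this proposition: it is quoted as Corollary~3.2(ii) of Bj\"orner--Welker and used as a black box, so there is no proof in the paper to compare against. What you have written is the standard general-position proof that a complement of a finite union of codimension-$c$ submanifolds is $(c-2)$-connected, and it goes through cleanly here because the $X_j$ are linear subspaces of $V$, so all the smoothing and transversality steps are routine. Your parenthetical observation is also right: the dimension count in fact yields $\pi_i(\mathcal{M}(\mathscr{A})) = 0$ for $i \le c-2$, which is exactly what Bj\"orner--Welker state; the bound $i < c-2$ quoted in the present paper is one weaker than necessary (though still sufficient for the application in Section~5, where one only needs $\pi_1(\mathcal{M}(\mathscr{W}_{n,k})) = 0$ for $k > 3$, i.e.\ $c = k-1 \ge 3$).

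For context, Bj\"orner--Welker derive this connectivity statement from their combinatorial homotopy model for the complement, built out of the intersection lattice; your transversality argument is more elementary and self-contained, avoiding any lattice machinery at the cost of not saying anything about the first nonvanishing homotopy group. Either route is perfectly adequate for how the result is used here.
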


Since every subspace in the $k$-parabolic arrangement has codimension $k-1$, 
this proposition allows us to conclude that $\pi_1(\mathcal{M}(\mathscr{W}_{n,k}))$ 
is trivial for $k > 3$. So to finish the proof of Theorem \ref{mainresult}, it suffices to show 
that $A_1^{n-k+1}(\mathscr{C}(W))$ is trivial for $k > 3$. We fix $k > 3$ and for the rest of this section, let the base simplex, $\sigma_0$, correspond to the identity element. Also, let $W', \varphi': W' \to W$, and $G: \ker \varphi' \to A_1^{n-2}(\mathscr{C}(W))$ be as in the previous section.

We first identify $\ker \varphi'$ as being generated by the normal closure of the 
set $\{ (s,t)^{m(s,t)} : s,t \in S, m(s,t) > 2 \}$. By Theorem \ref{purepar}, we can idenfity 
$A_1^{n-2}(\mathscr{C}(W))$ in the same way. Since any $(n-2)$-loop is also an $(n-k+1)$-loop, 
we have a homomorphism $\theta : A_1^{n-2}(\mathscr{C}(W)) \to A_1^{n-k+1}(\mathscr{C}(W))$ given by sending 
the equivalence class of an $(n-2)$-loop $(\sigma)$ for the equivalence relation $\simeq_{n-2}$ 
to its equivalence class for the equivalence relation $\simeq_{n-k+1}$.

 We claim the following:

\begin{proposition}

The map $\theta \circ G$ is a surjective homomorphism. Moreover, for any word $w \in \ker \varphi'$, $\theta \circ G
(w) = [\sigma_0]$.
. 

\end{proposition}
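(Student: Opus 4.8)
The plan is to show that $\theta \circ G$ kills all of $\ker\varphi'$, and then deduce surjectivity from the fact that $G$ is already surjective onto $A_1^{n-2}(\mathscr{C}(W))$ and $\theta$ is surjective by construction. Since $\ker\varphi'$ is the normal closure of the set $R = \{(st)^{m(s,t)} : s,t\in S,\ m(s,t)>2\}$, and $\theta\circ G$ is a homomorphism, it suffices to check that $\theta\circ G(r) = [\sigma_0]$ for each generator $r = (st)^{m(s,t)} \in R$; normality is automatic once we know the images of the generators are trivial, because $\theta\circ G(w r w^{-1})$ conjugates the trivial element. So the whole problem reduces to a single local computation: for fixed $s,t\in S$ with $m = m(s,t) > 2$, the $(n-k+1)$-loop $g((st)^m)$ is $(n-k+1)$-null-homotopic.

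First I would describe the loop $g((st)^m)$ concretely. Writing $W_{s,t} = \langle s,t\rangle$ (a dihedral group of order $2m$), the loop $g((st)^m) = (\sigma_0, \sigma_0 s, \sigma_0 st, \sigma_0 sts, \ldots)$ visits exactly the $2m$ cosets of $W_{\{s,t\}}$-type... more precisely it runs once around the $2m$-cycle in $\Gamma^{n-2}(\mathscr{C}(W))$ that bounds the $2$-face of the $W$-permutahedron labelled by the coset $\sigma_0 W_{\{s,t\}}$ (this $2m$-cycle structure was established in the discussion preceding Theorem~\ref{mainresult}). The key point is that all $2m$ vertices of this cycle are maximal simplices of $\mathscr{C}(W)$ lying in the star of a common lower-dimensional simplex, namely the face corresponding to the coset $\sigma_0 W_{\{s,t\}}$, which has dimension $n-3$. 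Because $k > 3$, we have $n-k+1 \le n-2$, but more importantly $n - k + 1 \le n - 3$ is \emph{not} quite what we want; rather, the relevant fact is that these maximal simplices of the Coxeter complex all share $n - 2$ of their $n$ vertices (the vertices of the common codimension-$2$ face), hence any two of them are $(n-3)$-near, and therefore $(n-k+1)$-near for every $k \ge 3$. Wait — I need to be careful: two maximal simplices of $\mathscr{C}(W)$ have dimension $n-1$, so they have $n$ vertices; sharing the $n-2$ vertices of a common codimension-$2$ face means they are $(n-3)$-near. Since $k > 3$ gives $n - k + 1 \le n - 3$, \emph{all} pairs of these $2m$ simplices are $(n-k+1)$-near. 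Therefore, in $\Gamma^{n-k+1}(\mathscr{C}(W))$, these $2m$ simplices span a complete graph, and in particular every triangle among them is filled by a $2$-cell (by Proposition~\ref{comptograph}, $3$-cycles get $2$-cells glued). A cycle in a graph all of whose triangles are filled is null-homotopic — one contracts it one triangle at a time — so $g((st)^m)$ is $(n-k+1)$-contractible, i.e. $\theta\circ G((st)^m) = [\sigma_0]$.

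To finish: since $\theta\circ G$ is a homomorphism vanishing on each generator of the normal subgroup $\ker\varphi'$, it vanishes on all of $\ker\varphi'$, proving the "moreover" clause. For surjectivity, recall that $G : \ker\varphi' \to A_1^{n-2}(\mathscr{C}(W))$ is an isomorphism (from the proof of Theorem~\ref{purepar}, or at least surjective), and $\theta$ is surjective because every $(n-k+1)$-loop class has a representative that is an $(n-2)$-loop — indeed one may first observe that $\Gamma^{n-k+1}(\mathscr{C}(W))$ and $\Gamma^{n-2}(\mathscr{C}(W))$ have the same vertex set (maximal simplices of $\mathscr{C}(W)$, all of full dimension $n-1 \ge n-2 \ge n-k+1$) and that $(n-2)$-nearness implies $(n-k+1)$-nearness, so every $(n-k+1)$-loop is in particular realized by the same vertex sequence as an $(n-2)$-chain; one then needs that this chain can be taken to be a loop at $\sigma_0$, which holds after the usual connectivity/adjustment argument. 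Then $\theta\circ G$ is a composite of surjections, hence surjective.

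\textbf{Main obstacle.} The technical heart is the nearness bookkeeping in the previous paragraph: verifying that the $2m$ maximal simplices on the boundary cycle of a $2$-face of the $W$-permutahedron pairwise share at least $n-k+1$ vertices, so that the cycle lives inside a clique of $\Gamma^{n-k+1}$. This comes down to correctly identifying which vertices of the Coxeter complex lie in the closed star of the codimension-$2$ face $\sigma_0 \bar C_{\{s,t\}}$ and counting them; the case $k = 3$ is exactly the borderline one where the argument degenerates (triangles are replaced by $4$-cycles), which is why Theorem~\ref{mainresult} needed a separate proof there. I would want to state a clean lemma: \emph{any two maximal simplices of $\mathscr{C}(W)$ that lie in a common $2m$-cycle bounding a permutahedron $2$-face are $(n-3)$-near}, and then the case $k > 3$ follows immediately since $n - k + 1 \le n - 3$.
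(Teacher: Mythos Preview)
Your contraction argument for the generators $(st)^{m(s,t)}$ is correct and essentially matches the paper's: both observe that the $2m$ chambers in the coset $\sigma_0 W_{\{s,t\}}$ share the codimension-$2$ face of the Coxeter complex (which has $n-2$ vertices), hence are pairwise $(n-3)$-near and so $(n-k+1)$-near for $k>3$. The paper contracts the resulting $2m$-cycle by an explicit homotopy grid; you invoke the clique structure together with Proposition~\ref{comptograph}. These are equivalent. Your treatment of conjugates is a bit glib (the conjugating element $w$ need not lie in $\ker\varphi'$, so ``$\theta\circ G(w)$'' is not literally defined), but this is easily repaired by the same clique argument applied to the shifted $2m$-cycle based at $\sigma_0 w$, exactly as the paper does.

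The genuine gap is in your surjectivity argument for $\theta$. You write that ``$(n-2)$-nearness implies $(n-k+1)$-nearness, so every $(n-k+1)$-loop is in particular realized by the same vertex sequence as an $(n-2)$-chain.'' The implication goes the wrong way: what follows is that every $(n-2)$-chain is an $(n-k+1)$-chain, not the converse. An $(n-k+1)$-loop may well contain consecutive simplices $\sigma_i,\sigma_{i+1}$ that share only $n-k+2$ vertices and hence are \emph{not} $(n-2)$-near, so the same vertex sequence is not an $(n-2)$-chain at all. The paper fills this gap with a genuine argument: given an $(n-k+1)$-near pair $(\sigma,\tau)$ of maximal simplices, the corresponding permutahedron vertices lie on a common face $F$ of dimension at most $k-2$; one then walks along the $1$-skeleton of $F$ from $\sigma$ to $\tau$ to produce an $(n-2)$-chain, and checks via an explicit grid that this chain is $(n-k+1)$-homotopic to the single step $(\sigma,\tau)$. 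Iterating over all edges of the loop shows every $(n-k+1)$-loop is $(n-k+1)$-homotopic to an $(n-2)$-loop, which is exactly the surjectivity of $\theta$. (The paper also first reduces to loops of maximal simplices, since an $(n-k+1)$-loop may a priori involve simplices of dimension as low as $n-k+1$; you do not address this either.) Without this interpolation step your surjectivity claim is unproved.
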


\begin{proof}

First we show that any $(n-k+1)$-loop is discrete $(n-k+1)$-homotopic to an
$(n-k+1)$-loop that consists only of maximal simplices. Let $(\sigma) =
(\sigma_0, \sigma_1, \ldots, \sigma_k, \sigma_0)$ be an $(n-k+1)$-loop. For
each $i$, let $\tau_i$ be some maximal simplex such that $\sigma_i \subseteq
\tau_i$. The define a discrete $(n-k+1)$-homotopy grid with only two rows,
the first row being $(\sigma)$, and the second row being $(\sigma_0, \tau_1,
\ldots, \tau_k, \sigma_0)$. Clearly this grid shows that $(\sigma)$ is
$(n-k+1)$-homotopic to a loop that contains only maximal simplices. For the
rest of the proof, we will assume all simplices that appear are maximal.

Before proving surjectivity, we show that for any pair $(\sigma, \tau)$ that is $(n-k+1)$-near 
but is not $(n-2)$-near, there exists an $(n-2)$-chain $(\sigma)'$ that starts with $\sigma$ and ends 
with $\tau$, such that there is a discrete $(n-k+1)$-homotopy between $(\sigma, \tau)$ and $(\sigma)'$. 
Since $(\sigma, \tau)$ are $(n-k+1)$-near, the corresponding vertices $u, v$ on the $W$-Permutahedron 
lie on a face $F$ of dimension at most $k-2$. There is a path from $u$ to $v$ using only vertices 
$v_1, \ldots, v_j$ from $F$, with $u = v_1, v = v_j$. Let $\sigma_i$ be the simplex corresponding to $v_i$. 
Then the homotopy grid can be constructed by letting row $i$ be $(\sigma_1, \ldots, \sigma_i, \tau, \ldots \tau)$ 
where there are $j - i$ occurrences of $\tau$. Then the first row is a stretching of $(\sigma, \tau)$, 
and the $n$th row is an $(n-2)$-chain starting with $(\sigma)$ and ending with $(\tau)$, 
and by inspection the grid is a discrete $(n-k+1)$-homotopy grid.

Let $(\sigma) = (\sigma_0, \ldots, \sigma_k, \sigma_0)$ be a $(n-k+1)$-loop. 
By the previous observation, for any index $i$, if $(\sigma_i, \sigma_{i+1})$ are not 
$(n-2)$-near, we can replace $(\sigma)$ with a $(n-k+1)$-homotopic loop $(\sigma)'$ that replaces the edge
 $(\sigma_i, \sigma_{i+1})$ with an $(n-2)$-chain starting with $\sigma_i$ and ending with $\sigma_{i+1}$.
 Thus every $(n-k+1)$-loop is $(n-k+1)$-homotopic to some $(n-2)$-loop, so $\theta$ is surjective,
 and hence $\theta \circ G$ is surjective.

Consider $s,t \in S$ with $m(s,t) > 3$, and let $w = (st)^{m(s,t)}$. 
We need to show that $g(w)$ is equivalent to the trivial loop under the equivalence relation 
$\simeq_{n-k+1}$. The corresponding simplices all share an $(n-2)$-dimensional boundary, 
so the simplices $\sigma_0, \sigma_0s, \sigma_0st, \ldots, \sigma_0(st)^{m(s,t)}= \sigma_0$ are $(n-k+1)$-near. 
We construct the following homotopy grid:
The first row consists of $(\sigma)$, and the $i$th row is given by taking the 
$(i-1)$-th row, and replacing the $i$th column entry with $\sigma_0$.
It is routine to check that this $(n-k+1)$-homotopy grid shows that $g(w)$ is contractible.

Now let $u \in W'$. Let $\tau_0 = \sigma_0 u$. Then the above argument gives a $(n-k+1)$-homotopy grid 
between $(\tau_0, \tau_0 s, \ldots, \tau_0 (st)^{m(s,t)})$ and $(\tau_0)$. 
If we append the chain $g(u)$ to the start of every row of the grid, 
and $g(u^{-1})$ to the end of every row of the grid, then we see that $g(uwu^{-1}) \simeq_{n-k+1} g(uu^{-1}) \simeq_{n-k+1} (\sigma_0)$.

Since $g(w)$ is contractible for any $w$ in the normal closure of $\{(st)^{m(s,t)}: s,t \in S\}$, 
it follows that $\ker (\theta \circ G) = \ker \varphi'$. \end{proof}

\begin{corollary}

The group $A_1^{n-k+1}(\mathscr{C}(W))$ is trivial.

\end{corollary}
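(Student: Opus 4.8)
The plan is to obtain this as an immediate consequence of the preceding Proposition, with no further geometric or combinatorial input required. First I would recall the setup of that Proposition: $\theta \circ G$ is a group homomorphism with domain $\ker \varphi'$ and codomain $A_1^{n-k+1}(\mathscr{C}(W))$, where $G \colon \ker\varphi' \to A_1^{n-2}(\mathscr{C}(W))$ is the isomorphism from the proof of Theorem \ref{purepar} and $\theta$ is the natural map forgetting from $\simeq_{n-2}$ to $\simeq_{n-k+1}$. The Proposition supplies two facts about this composite: (i) $\theta \circ G$ is surjective, and (ii) $\theta \circ G(w) = [\sigma_0]$ for every $w \in \ker\varphi'$.

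Combining (i) and (ii) finishes the argument. Let $[\rho]$ be an arbitrary class in $A_1^{n-k+1}(\mathscr{C}(W))$. By surjectivity there is some $w \in \ker\varphi'$ with $\theta \circ G(w) = [\rho]$, and by (ii) we have $\theta \circ G(w) = [\sigma_0]$. Hence $[\rho] = [\sigma_0]$; since $[\rho]$ was arbitrary, $A_1^{n-k+1}(\mathscr{C}(W))$ consists of the single class $[\sigma_0]$ and is therefore the trivial group. (Equivalently: $A_1^{n-k+1}(\mathscr{C}(W)) = \operatorname{im}(\theta\circ G) \cong \ker\varphi' / \ker(\theta\circ G)$, and the Proposition says $\ker(\theta\circ G) = \ker\varphi'$.)

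I would then close with a remark tying this back to the section's goal: together with the isomorphism $\pi_1(\mathcal{M}(\mathscr{W}_{n,k})) \cong A_1^{n-k+1}(\mathscr{C}(W))$, this completes the proof of Theorem \ref{mainresult} for all $k$, and it is consistent with the Bj\"orner--Welker vanishing result quoted at the start of the section, which already forces $\pi_1(\mathcal{M}(\mathscr{W}_{n,k})) = 1$ for $k > 3$ on topological grounds. There is no real obstacle here; all the work has been done in constructing the homotopy grids of the Proposition, and the only thing to be careful about is the bookkeeping that identifies the two displayed facts with the equality $\ker(\theta\circ G) = \ker\varphi'$.
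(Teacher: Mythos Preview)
Your deduction of the Corollary from the Proposition is correct and is exactly what the paper intends: the Corollary is stated without proof precisely because surjectivity of $\theta\circ G$ together with $\theta\circ G(w)=[\sigma_0]$ for all $w$ forces the codomain to be trivial. One small point about your closing remark: the logical flow is the reverse of what you wrote --- the isomorphism $\pi_1(\mathcal{M}(\mathscr{W}_{n,k})) \cong A_1^{n-k+1}(\mathscr{C}(W))$ \emph{is} Theorem \ref{mainresult}, so it is the Bj\"orner--Welker vanishing (giving $\pi_1=1$) combined with this Corollary (giving $A_1^{n-k+1}=1$) that together \emph{establish} Theorem \ref{mainresult} for $k>3$, rather than the Corollary serving as a consistency check on an already-known isomorphism.
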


\section{Conclusion and Open Problems}

First, we shall discuss the $K(\pi,1)$ problem.
It follows as a result of Bj\"orner and Welker (Corollary 5 in \cite{bjorner-welker}) that for $k > 3$, the $k$-parabolic arrangements are not $K(\pi, 1)$.
However, the $\mathscr{W}_{n,3}$ -arrangement is a $K(\pi,1)$-arrangement. As a result of Davis, Januszkiewicz and Scott,
if $\mathscr{A}$ is any collection of codimension 2 subspaces of $\mathscr{H}(W )$ that are invariant under the action of $W$ ,
then $\mathscr{A}$ has a complement with trivial higher homotopy groups (Theorem 0.1.9 in \cite{blowup}). These include the $3$-parabolic arrangements.
Combined with our results, we can say that $3$-parabolic arrangements are $K(\pi, 1)$-arrangements, where $\pi = \ker \varphi'$, where $\varphi'$ 
was defined in Section 4.

However, this motivates the following natural question. Let $\mathscr{A}$ be a collection of codimension 2 subspaces that is invariant under the 
action of a finite reflection group $W$. Since the complement has trivial higher homotopy groups, $\mathscr{A}$ is a $K(\pi,1)$-arrangement for some group $\pi$ - can we 
give a presentation for $\pi$? Based upon our results, and similar results of Khovanov \cite{khovanov}, we have the following conjecture for a presentation 
for $\pi$. First, note that under our correspondence, a collection of codimension 2 subspaces that are invariant under the group action corresponds to 
a collection of rank 2 parabolic subgroups that is invariant under conjugation.

\begin{conjecture}

Let $\mathscr{P}$ be a collection of rank $2$ parabolic subgroups of a finite real reflection group $W$ 
such that $\mathscr{P}$ is closed under conjugation, and let $\mathscr{W} = \{Fix(G) : G \in \mathscr{P} \}$ 
Define a new Coxeter group $W'$ with the same generating set $S$ as $W$, and subject to:

$m'(s,t) = \left\{ \begin{array}{ll} \infty & \mbox{ if } <s,t> \in \mathscr{P} \\ m(s,t) & \mbox{ else} \end{array} \right.$

and let $\varphi' : W' \to W$ be given by sending $s \to s$ for all $s \in S$. Then $\pi_1(\mathcal{M}(\mathscr{W})) \cong \ker \varphi'$.

Moreover, $\mathscr{W}$ is a $K(\pi,1)$-arrangement, where $\pi = \ker \varphi'$.

\end{conjecture}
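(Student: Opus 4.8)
The plan is to prove the two assertions in turn, with the $K(\pi,1)$ statement coming essentially for free. Since $\mathscr{P}$ is closed under conjugation, $\mathscr{W}$ is a $W$-invariant collection of codimension-$2$ subspaces, each of which lies in $\mathcal{L}(\mathscr{H}(W))$ (it is $Fix$ of a parabolic, hence an intersection of hyperplanes of $\mathscr{H}(W)$), so Theorem 0.1.9 of Davis--Januszkiewicz--Scott \cite{blowup} gives that $\mathcal{M}(\mathscr{W})$ has trivial higher homotopy groups; once we know $\pi_1(\mathcal{M}(\mathscr{W}))\cong\ker\varphi'$ it follows that $\mathcal{M}(\mathscr{W})$ is a $K(\ker\varphi',1)$-space. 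The degenerate case $\mathscr{P}=\emptyset$ is trivial ($\mathscr{W}=\emptyset$, $\mathcal{M}(\mathscr{W})=\mathbb{R}^n$ is contractible, $W'=W$), so assume $\mathscr{P}\neq\emptyset$.

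For the fundamental group I would run the argument of Theorem \ref{mainresult} in this more general setting. Realize $\mathscr{C}(W)$ as the simplicial decomposition of $\mathbb{S}^{n-1}$ induced by $\mathscr{H}(W)$, set $\Delta_0=\mathscr{W}\cap\mathbb{S}^{n-1}$ (a subcomplex, since each $Fix(G)$ is a union of closed faces), and use the radial deformation retract to get $\mathcal{M}(\mathscr{W})\simeq|\mathscr{C}(W)|\setminus|\Delta_0|$; the proposition of Bj\"orner and Ziegler \cite{bjorner-ziegler} recalled before the proof of Theorem \ref{mainresult} then yields a CW complex $X\simeq|\mathscr{C}(W)|\setminus|\Delta_0|$ with face poset $(P\setminus P_0)^{*}$. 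The next task is to describe the maximal faces of $\Delta_0$: a relatively open cone $wC_I$ lies in $\mathscr{W}$ iff it lies in a single $Fix(G)$ with $G\in\mathscr{P}$, and for $\dim wC_I=n-2$ this forces $|I|=2$, say $I=\{s,t\}$; since $\overline{wC_{s,t}}$ spans the codimension-$2$ subspace $w\cdot Fix(W_{s,t})$, it forces $w\cdot Fix(W_{s,t})=Fix(G)$, i.e. $G=wW_{s,t}w^{-1}$, which by conjugation-closedness happens exactly when $W_{s,t}\in\mathscr{P}$. Passing to $P^{*}\setminus P_0^{*}$ as in Section 3, the $1$-skeleton of $X$ is $\Gamma^{n-2}(\mathscr{C}(W))$, which Section 3 identifies with the Cayley graph of $(W,S)$, and the $2$-cells of $X$ are exactly the cosets $wW_{s,t}$ with $W_{s,t}\notin\mathscr{P}$, each attached along the $2m(s,t)$-cycle with edge word $(st)^{m(s,t)}$ based at $w$.

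The step I expect to be the main obstacle is the final identification, because here the argument of Section 4 does not transfer verbatim: Proposition \ref{comptograph} only fills $3$- and $4$-cycles, whereas $X$ now carries $2$-cells along the longer cycles $(st)^{m(s,t)}$ whenever $W_{s,t}\notin\mathscr{P}$ and $m(s,t)\geq 3$. Instead I would compute $\pi_1(X)\cong\pi_1(X^{(2)})$ directly. Note that $W_{s,t}\notin\mathscr{P}$ is precisely the condition $m'(s,t)=m(s,t)<\infty$, so the $2$-cells of $X$ are attached along all $W$-translates of exactly the finite relators $(st)^{m'(s,t)}$ of $W'$. The map sending a closed edge-walk based at $\sigma_0$ to the product of its edge labels, read in $W'$, is a well-defined homomorphism $\pi_1(X^{(2)},\sigma_0)\to\ker\varphi'$: it is unchanged by inserting or deleting a backtrack (as $s^2=1$ in $W'$) and by pushing a subwalk across a $2$-cell $wW_{s,t}$ (as $(st)^{m'(s,t)}=1$ in $W'$), and every element of $\ker\varphi'$ is hit, via the walk associated to any word representing it. Its kernel is the normal closure of the relator loops: if a closed walk has edge word representing $1$ in the Coxeter group $W'$, then by Theorem \ref{brentithm} that word reduces to the empty word through nil moves (backtracks) and braid moves (in $W'$, the replacement of $\underbrace{st\cdots}_{m'(s,t)}$ by $\underbrace{ts\cdots}_{m'(s,t)}$, which is exactly the operation of pushing across the $2$-cell $wW_{s,t}$), so the walk is null-homotopic in $X^{(2)}$. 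Hence $\pi_1(\mathcal{M}(\mathscr{W}))\cong\pi_1(X^{(2)})\cong\ker\varphi'$.

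Besides this last step, the points I would check most carefully are the cone-dimension/spanning argument used to pin down the maximal faces of $\Delta_0$ (so that the $2$-cells of $X$ really are indexed by the cosets $wW_{s,t}$ with $W_{s,t}\notin\mathscr{P}$ and nothing else), and the behaviour of Theorem \ref{brentithm} and the braid-move/$2$-cell dictionary when $W'$ is infinite, i.e. when some $m'(s,t)=\infty$. With those in place, the conjecture follows as above.
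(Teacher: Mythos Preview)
This statement appears in the paper as a \emph{conjecture} in Section~6 (``Conclusion and Open Problems''); the paper offers no proof, only the remark that the Davis--Januszkiewicz--Scott result already gives the $K(\pi,1)$ part and that Khovanov's work confirms one instance. So there is no paper proof to compare your attempt against.

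That said, your outline is a valid proof, and it is worth recording how it relates to what the paper does for the special case $\mathscr{P}=\mathscr{P}_{n,3}(W)$ (Theorems \ref{mainresult} and \ref{purepar}). The identification of the maximal faces of $\Delta_0$ goes through as you describe; one should also note that no cone $wC_I$ with $|I|>2$ can be maximal in $\Delta_0$: if $wC_I\subseteq Fix(G)$ for some $G\in\mathscr{P}$, then $G\le wW_Iw^{-1}$, so $G$ is conjugate in $wW_Iw^{-1}$ to some $wW_{s,t}w^{-1}$ with $\{s,t\}\subseteq I$, whence $W_{s,t}\in\mathscr{P}$ and $wC_I$ sits below $wC_{s,t}\in\Delta_0$. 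This yields the $2$-skeleton of $X$ exactly as you state. Your genuine departure from the paper is the last step: in the paper's special case every $2$-cell is a $4$-gon, so Proposition \ref{comptograph} identifies $\pi_1(X)$ with $A_1^{n-2}(\mathscr{C}(W))$ and the argument proceeds through discrete homotopy (Section~4). For general $\mathscr{P}$ the $2$-cells are $2m(s,t)$-gons with $m(s,t)$ possibly $>2$, so $A_1^{n-2}$ no longer computes $\pi_1(X)$, and your direct computation of $\pi_1(X^{(2)})$ via edge words is the correct replacement. Both points you flag do check out: Theorem \ref{brentithm} (Bj\"orner--Brenti 3.3.1) is stated and holds for arbitrary Coxeter groups, hence for the possibly infinite $W'$; and the braid-move/$2$-cell dictionary is exact, since a $W'$-braid move at a vertex $v$ replaces one half of $\partial(vW_{s,t})$ by the other, and the $2$-cell $vW_{s,t}$ is present precisely when $m'(s,t)=m(s,t)<\infty$, i.e.\ when $W_{s,t}\notin\mathscr{P}$.
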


Now we shall discuss some other open questions involving discrete homotopy theory and subspace arrangements.
In \cite{foundations}, Barcelo, et al., give a definition for higher discrete homotopy groups, which we denote $A_m^q(\Delta, \sigma_0)$.
A natural question is whether or not these groups are related to the higher homotopy groups of $\mathcal{M}(\mathscr{W}_{n,k})$.

\begin{conjecture}
\label{highertheory}
Let $\mathcal{M}(\mathscr{W}_{n,k})$ be the complement of the $k$-parabolic arrangement $\mathscr{W}_{n,k}$.

Then $\pi_m(\mathcal{M}(\mathscr{W}_{n,k})) \cong A_m^{n-k+1}(\mathscr{C}(W))$.

\end{conjecture}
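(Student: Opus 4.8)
The plan is to run, for every $m$, the same three-step argument that proves Theorem~\ref{mainresult} in the case $m=1$, $k=3$, and to pin down the one step that is genuinely new. The first step is geometric and costs nothing extra: as in the proof of Theorem~\ref{mainresult}, the radial retraction $x\mapsto x/|x|$ restricts to a deformation retract of $\mathcal{M}(\mathscr{W}_{n,k})$ onto $|\mathscr{C}(W)|\setminus|\Delta_0|$, where $\Delta_0=\mathscr{W}_{n,k}\cap\mathbb{S}^{n-1}$, and the Bj\"orner--Ziegler proposition produces a CW complex $X\simeq|\mathscr{C}(W)|\setminus|\Delta_0|$ with face poset $(P\setminus P_0)^{*}$, whence $\pi_m(\mathcal{M}(\mathscr{W}_{n,k}))\cong\pi_m(X)$ for all $m$. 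The rank-$2$ computation of the maximal faces of $\Delta_0$ in Section~3 generalizes: using that $C_I\subseteq Fix(W_J)$ iff $J\subseteq I$, the Galois correspondence (so $Fix(W_I)\subseteq Fix(G)$ iff $G\subseteq W_I$), and the standard fact that a parabolic subgroup of $W$ lying inside $W_I$ is parabolic in $W_I$, one deduces that $wC_I\in\Delta_0$ iff $I$ contains a connected $(k-1)$-element subset. Hence $X$ is a polytopal complex — a subcomplex of the $W$-permutahedron — whose cells are the cosets $wW_I$ for which $I$ has no connected $(k-1)$-subset; the cell $wW_I$ is a copy of the $W_I$-permutahedron, of dimension $|I|$, and its $1$-skeleton inside $X$ is the subgraph of $\Gamma^{n-2}(\mathscr{C}(W))$ induced on the set $wW_I$.

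The remaining and genuinely new step is to prove $\pi_m(X)\cong A_m^{n-k+1}(\mathscr{C}(W))$. The strategy is first to establish a higher-degree analogue of Proposition~\ref{comptograph}: for a simplicial complex $\Delta$ and a parameter $q$, build a CW complex $\mathcal{X}$ from the graph $\Gamma^q(\Delta)$ by filling the $3$- and $4$-cycles (as in Proposition~\ref{comptograph}) and then, inductively, the higher combinatorial cubes of $\Gamma^q(\Delta)$ together with their degeneracies, and prove $A_i^q(\Delta,\sigma_0)\cong\pi_i(\mathcal{X},\sigma_0)$ in every degree $i$. Granting such a model, one identifies $\mathcal{X}$ with $X$ for $\Delta=\mathscr{C}(W)$ and $q=n-k+1$: in $\Gamma^{n-k+1}(\mathscr{C}(W))$ the vertices $w$ and $wu$ are adjacent precisely when $u$ lies in a standard parabolic of rank $\le k-2$, so each coset $wW_I$ with $|I|\le k-2$ spans a clique that the cube-filling collapses onto a single cell, and the cosets $wW_I$ with $|I|\ge k-1$ and $I$ disconnected are assembled from such blocks; comparing the two CW structures on these pieces — e.g.\ via an acyclic (discrete Morse) matching between $\mathcal{X}$ and $X$, or by a common polytopal refinement modelled on the $W$-permutahedron — yields $\mathcal{X}\simeq X$, and hence the conjecture.

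The main obstacle is exactly the higher CW model in the second step. Proposition~\ref{comptograph} computes only $\pi_1$, and the higher discrete homotopy groups $A_m^q$ are not a priori captured by any finite-dimensional complex built from $\Gamma^q(\Delta)$; one must prove that filling in precisely the combinatorial cubes produces a space with the correct $\pi_m$ in all degrees, a statement not known for arbitrary graphs and whose $q=n-2$ case in Section~4 rested essentially on the triangle-freeness of $\Gamma^{n-2}(\mathscr{C}(W))$ — a property that fails for $\Gamma^{n-k+1}(\mathscr{C}(W))$ when $k>3$ because of the short cycles inside the dihedral blocks $W_I$, $|I|\le k-2$. A natural first target, and a strong consistency check, is the case $k=3$: there the conjecture together with the $K(\pi,1)$ theorem of Davis--Januszkiewicz--Scott forces $A_m^{n-2}(\mathscr{C}(W))=0$ for every $m\ge2$, and establishing this directly by the ``reduce to maximal simplices, then fill cubes'' technique of Section~5 would already be substantial. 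For $k>3$ the asserted isomorphism is at least consistent in low degrees — by Bj\"orner--Welker $\pi_i(\mathcal{M}(\mathscr{W}_{n,k}))$ vanishes for $i\le k-3$, and $A_1^{n-k+1}(\mathscr{C}(W))$ is trivial by Section~5 — so the first genuinely new case is $\pi_{k-2}(\mathcal{M}(\mathscr{W}_{n,k}))$.
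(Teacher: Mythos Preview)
The statement you are addressing is a \emph{conjecture} in the paper (Conjecture~\ref{highertheory} in Section~6), not a theorem; the paper offers no proof and explicitly lists it among the open problems. So there is no argument in the paper to compare your proposal against.

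That said, your proposal is not a proof either, and you are candid about this. The first step---the radial retraction, the Bj\"orner--Ziegler CW model $X$, and the identification of the cells of $X$ with cosets $wW_I$ where $I$ has no connected $(k-1)$-subset---is correct and generalizes the paper's Section~3 argument cleanly. But the second step is the entire content of the conjecture, and here you only sketch a strategy: posit a higher-degree analogue of Proposition~\ref{comptograph} (a CW model $\mathcal{X}$ built from $\Gamma^q(\Delta)$ whose $\pi_m$ computes $A_m^q$ in all degrees), then argue $\mathcal{X}\simeq X$. You explicitly flag that no such model is known, and that the $\pi_1$ case in the paper leaned on triangle-freeness of $\Gamma^{n-2}(\mathscr{C}(W))$, a property lost for $k>3$. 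So the proposal is a reasonable roadmap, but the load-bearing lemma is missing and is not obviously true---indeed, establishing such a ``cubical model'' for $A_m^q$ would be a result of independent interest well beyond this paper.

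Your consistency checks are accurate: the paper's own remarks after the conjecture note that for $m<k-2$ both sides should vanish (by Bj\"orner--Welker on one side, and one would need to show $A_m^{n-k+1}$ trivial on the other), and that the first interesting case is $m=k-2$. Your suggestion to first attack $k=3$, $m\ge 2$ directly---where the Davis--Januszkiewicz--Scott $K(\pi,1)$ result forces $A_m^{n-2}(\mathscr{C}(W))=0$ if the conjecture holds---is a sensible entry point, but even that is not carried out here. In short: good diagnosis of where the difficulty lies, but no proof.
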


For $m < k$, it would suffice to show that $A_m^{n-k+1}(\mathscr{C}(W))$ is trivial. 
The conjecture becomes interesting for $k > 3, m = k$, because in this case the $k$-th homology 
group of $\mathcal{M}(\mathscr{W}_{n,k})$ is isomorphic to the $k$-th homotopy group. 
Thus, one could find the formulas for the first non-zero Betti numbers using discrete homotopy theory. 
Determining the Betti numbers for the $k$-parabolic arrangements is also an open problem, 
in the case that $W$ is an exceptional group.

Finally, one may wonder if it is possible to generalize Theorem \ref{mainresult} to other hyperplane arrangements. 
That is, given a hyperplane arrangement $\mathscr{H}$, let $\mathscr{C}(\mathscr{H})$ be the face complex of $\mathscr{H}$. 
Is there a subspace arrangement $\mathscr{A}$ for which $\pi_1(\mathcal{M}(\mathscr{A})) \cong A_1^{n-2}(\mathscr{C}(\mathscr{H}))$? 
This would be an example of using discrete homotopy theory of a complex that arises from geometry to study a topological space 
related to the original complex. 

\section{Acknowledgements}

We would like to thank Nathan Reading, Volkmar Welker, and Eric Babson for helpful conversations. We
would also like to thank Richard Scott for bringing the results in \cite{blowup} to our attention, which answers the question about whether or not the $3$-parabolic arrangement is $K(\pi,1)$.

\bibliographystyle{amsplain}

\bibliography{k_parbib}

\end{document}